\documentclass{amsart}

\usepackage{amsmath,amssymb,amsthm}
\usepackage{mathrsfs}
\usepackage{thmtools,thm-restate}

\usepackage[dvipsnames]{xcolor}
\usepackage{tikz-cd}
\usepackage{graphicx}
\usepackage{float}

\usepackage{booktabs}
\usepackage{multirow}
\usepackage{enumerate}

\usepackage{indentfirst}
\usepackage[numbers,sort]{natbib}
\usepackage{hyperref}

\declaretheorem[name=Theorem,numberwithin=section]{thm}
\declaretheorem[name=Proposition,sibling=thm]{prop}
\declaretheorem[name=Lemma,sibling=thm]{lem}
\declaretheorem[name=Corollary,sibling=thm]{cor}
\declaretheorem[style=definition,name=Problem,sibling=thm]{prob}
\declaretheorem[style=definition,name=Definition,sibling=thm]{defn}
\declaretheorem[style=remark,name=Remark,sibling=thm]{rem}

\numberwithin{equation}{section}

\newcommand{\innpr}[2]{\left\langle #1,#2\right\rangle}
\newcommand{\Ric}{\mathrm{Ric}}
\newcommand{\diff}{\mathrm{d}}

\begin{document}

\title[]{Principal Distribution Isomorphisms and Almost Hermitian geometry on Isoparametric Hypersurfaces}



\author[L.X. Xiao]{Lixin Xiao}
\address{School of Mathematical Sciences, Laboratory of Mathematics and Complex Systems, Beijing Normal University, Beijing 100875, P.R. CHINA.}
\email{lixinxiao@mail.bnu.edu.cn}

\author[W.J. Yan]{Wenjiao Yan}
\address{School of Mathematical Sciences, Laboratory of Mathematics and Complex Systems, Beijing Normal University, Beijing 100875, P.R. CHINA.}
\email{wjyan@bnu.edu.cn}

\author[W.J. Zhang]{Wenjin Zhang$^{*}$}
\address{School of Mathematical Sciences, Laboratory of Mathematics and Complex Systems, Beijing Normal University, Beijing 100875, P.R. CHINA.}
\email{wjinzhang@mail.bnu.edu.cn}

\keywords{principal distribution, bundle isomorphism, almost complex structure, nearly K\"ahler structure, $*$-Ricci curvature.}
\thanks {$^{*}$ the corresponding author.}
\thanks{The project is partially supported by the NSFC (No. 12271038, 12526205).}

\subjclass[2020]{53C15, 32Q60, 58A30}

\date{}


\commby{}

\begin{abstract}
This paper investigates the isomorphisms between principal distributions $\mathcal{D}_k$ $(k=1,\dots 4)$ on OT--FKM type isoparametric hypersurfaces in spheres. We recover the isomorphism $\mathcal{D}_1 \cong \mathcal{D}_3$ established by Qian--Tang--Yan \cite{Q-T-Y 2}, and further construct the isomorphism  $\mathcal{D}_{2}\cong\mathcal{D}_{4}$ in specific cases. More significantly, we provide an explicit construction of a global vector bundle isomorphism $\mathcal{D}_1 \oplus \mathcal{D}_2 \cong \mathcal{D}_3 \oplus \mathcal{D}_4$ for all odd multiplicities $m$. As applications, we employ these isomorphisms to induce nearly K\"ahler structures on certain OT--FKM hypersurfaces. Finally, we prove that the $*$-Ricci curvature vanishes for any OT--FKM hypersurface admitting an almost Hermitian structure that interchanges principal distributions in pairs.
\end{abstract}

\maketitle



\section{Introduction}
The theory of isoparametric hypersurfaces in spheres has been a central topic in submanifold geometry and topology, initiating with the foundational work of E. Cartan. A hypersurface $M^n \subset S^{n+1}(1)$ is called isoparametric if it has constant principal curvatures. In his celebrated work \cite{Munz 1,Munz 2}, M\"unzner established a profound structural theorem, proving that the number $g$ of distinct principal curvatures is restricted to $1, 2, 3, 4$, or $6$. Moreover, the principal curvatures are given by $\lambda_{k}=\cot(\theta_{1}+\tfrac{k-1}{g}\pi)$ with $\theta_{1}\in (0,\tfrac{\pi}{g})$ for $1\leqslant k\leqslant g$, and their multiplicities satisfy $m_{k}=m_{k+2}$ (indices mod $g$). Consequently, the geometric data of such hypersurfaces is closely related with the multiplicity pair $(m_1, m_2)$. 

With M\"unzner's structural foundation established, the classification of isoparametric hypersurfaces became central challenge in this field. The initial phase of this program focused on restricting the admissible values of $(m_1, m_2)$ through topological constraints. Significant contributions regarding the topological invariants, cohomology rings, and homotopy properties were made by Abresch \cite{Abr}, Tang \cite{Tang}, Fang \cite{Fang, Fang99}. Based on these developments, a pivotal breakthrough for the $g=4$ case was made by Stolz \cite{Stolz}, who utilized these works to prove that the multiplicity pairs $(m_1, m_2)$ must coincide with those of the homogeneous examples or the inhomogeneous OT--FKM type constructed by Ferus, Karcher, and M\"unzner \cite{F-K-M}. Building on decades of cumulative efforts, the classification was finally completed in 2020: all isoparametric hypersurfaces are either homogeneous or of OT--FKM type (see \cite{C-C-J, Imm, CQS-1, CQS-2, CQS-3}). Notably, the OT--FKM type represents the vast majority, providing a rich source of manifolds with intricate geometric and topological properties.

A fundamental geometric object associated with the isoparametric theory is the \emph{principal distribution} $\mathcal{D}_k$, defined as the eigen-distribution corresponding to the principal curvature $\lambda_k$. These distributions are integrable, with leaves being totally geodesic spheres (see, for example, \cite{Chi}). Despite the classification of the hypersurfaces, the properties of these distributions as vector bundles remains a subject of great interest. A natural yet challenging problem is to classify these distributions as vector bundles over the isoparametric hypersurface and determine their isomorphism classes.

Regarding this problem, for $g=4$ and $g=6$, Abresch \cite{Abr} calculated the cohomology of the induced hypersurface $\overline{M}$ in the real projective space (as $M$ is invariant under the antipodal map) and the Stiefel--Whitney classes of the induced principal distributions $\overline{\mathcal{D}}_k$. He showed that for OT--FKM type hypersurfaces, the total Stiefel--Whitney classes satisfy $w(\overline{\mathcal{D}}_{1})=w(\overline{\mathcal{D}}_{3})=1$ and $w(\overline{\mathcal{D}}_{2})=w(\overline{\mathcal{D}}_{4})\neq 1$. 
Furthermore, in the $g=4$ case, there exists an involution $T:\overline{M}\to\overline{M}$, which maps $[x]$ to the equivalence class of the normal vector $\xi(x)$.
Abresch proved that $T^*$ acts as the identity on the $\mathbb{Z}_2$-cohomology. Fang \cite{Fang} further analyzed the Pontryagin classes, proving their triviality under the assumption that $T^*$ is the identity on the integral cohomology. See also \cite{W, Q-T-Y 1, G-T-Y} for other relevant aspects.

Motivated by Abresch's topological insights and the symmetry of the focal submanifolds, it is natural to conjecture that for OT--FKM type hypersurfaces, the principal distributions are pairwise isomorphic, i.e., $\mathcal{D}_{1}\cong\mathcal{D}_{3}$ and $\mathcal{D}_{2}\cong\mathcal{D}_{4}$. Recently, Qian, Tang and Yan \cite{Q-T-Y 2} established this isomorphism for $\mathcal{D}_{1}$ and  $\mathcal{D}_{3}$. Moreover, for the specific case $m_1=1$, they successfully constructed integrable almost complex structures by defining a correspondence mapping $\mathcal{D}_{1}$ to $\mathcal{D}_{3}$ and $\mathcal{D}_{2}$ to $\mathcal{D}_{4}$. For general multiplicities, however, the isomorphism between the remaining pair remains an open issue. They posed the following problem:
\begin{prob}(\cite{Q-T-Y 2})\label{prob1.1}
	For $m \geq 2$, are $\mathcal{D}_{2}$ and $\mathcal{D}_{4}$ isomorphic as vector bundles over $M$? \end{prob}

Beyond its intrinsic topological significance, an affirmative answer to Problem \ref{prob1.1} provides a natural link to differential geometry, particularly within the framework of almost Hermitian geometry. The isomorphisms between these principal distributions induce almost complex structures on $M$ in a natural way; by construction, such structures are inherently encoded with the geometric features of the hypersurface.

This framework allows for a systematic exploration of distinguished geometric structures, such as the nearly Kähler condition and the behavior of the $\ast$-Ricci curvature. In particular, by employing congruent pairs of OT--FKM type hypersurfaces, we construct explicit examples to demonstrate these properties:

\begin{restatable}{prop}{NearlyKahler}
\label{eg-nearlykahler}
	Isoparametric hypersurfaces of OT--FKM type with multiplicities $(m_1,m_{2})\in\{(1,2), (1,6), (2,5),(3,4)\}$ admit a nearly K\"ahler structure.
\end{restatable}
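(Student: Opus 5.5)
The plan is to build an almost complex structure $J$ on $M$ that swaps the principal distributions in pairs, and then check the nearly K\"ahler identity $(\nabla_X J)X=0$ directly from the Gauss formula and the Clifford-algebraic description of the hypersurface.

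\textbf{Setting up the Clifford data.} For the listed multiplicities the ambient space is small: $n=6$ for $(1,2)$, and $n=14$ for $(1,6),(2,5),(3,4)$. Write the OT--FKM hypersurface through a symmetric Clifford system $P_0,\dots,P_m$ on $\mathbb{R}^{2l}$ with $m_1=m$ and $m_2=l-m-1$, so that $l=4$ in the first case and $l=8$ in the others.

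The key observation I would exploit is that each of these hypersurfaces belongs to a \emph{congruent pair}. It is congruent to the OT--FKM hypersurface with the roles of $m_1$ and $m_2$ exchanged, which is built from a second Clifford system $Q_0,\dots,Q_{m_2}$ on the same $\mathbb{R}^{2l}$. Here $\mathbb{R}^{2l}$ is $\mathbb{H}^2$ or $\mathbb{O}^2$, which is exactly why these four pairs occur.

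\textbf{Step 1: the two isomorphisms.} Using the first system, the isomorphism $\mathcal{D}_1\cong\mathcal{D}_3$ of Qian--Tang--Yan is written explicitly. At $x\in M$ with unit normal $\xi$, put $p_\pm=(x\pm\xi)/\sqrt2$ on the focal submanifolds. Then $\mathcal{D}_1$ and $\mathcal{D}_3$ are spanned by vectors of the form $P_\alpha p_\pm$ ($\alpha\ge1$, orthogonal to $\langle P_0p,p\rangle$-directions), and the map $P_\alpha p_+\mapsto P_\alpha p_-$, suitably normalized, is an isometric bundle isomorphism.

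Since $\mathcal{D}_2,\mathcal{D}_4$ of our hypersurface play the role of $\mathcal{D}_1,\mathcal{D}_3$ for the congruent one, the same construction with the $Q_\beta$ gives an isometric isomorphism $\mathcal{D}_2\to\mathcal{D}_4$.

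\textbf{Step 2: defining $J$.} Set $J=\phi$ on $\mathcal{D}_1$ and $J=-\phi^{-1}$ on $\mathcal{D}_3$, and similarly with $\psi$ on $\mathcal{D}_2\oplus\mathcal{D}_4$. This gives $J^2=-1$ and makes $J$ orthogonal for the induced metric, so $(M,g,J)$ is almost Hermitian.

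\textbf{Step 3: reducing $(\nabla_X J)X=0$.} Using $\nabla_XY=\bar\nabla_XY-\langle AX,Y\rangle\xi+\langle X,Y\rangle x$ and the explicit formulas for $J$ (built from $P_\alpha$, $Q_\beta$, $x$, $\xi$), compute $\nabla J$ on a frame adapted to $\mathcal{D}_1\oplus\cdots\oplus\mathcal{D}_4$. Polarizing, it suffices to show that $\langle(\nabla_XJ)Y,Z\rangle$ is totally skew in $X,Y,Z$.

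By the product structure of the principal distributions, the only possibly nonzero components have $X,Y,Z$ in three distinct distributions, or two in one distribution. The latter are controlled by the Codazzi equation and the known connection forms $\langle\nabla_{X}E_i,E_j\rangle=\frac{\langle(A-\lambda_i)\ldots\rangle}{\lambda_i-\lambda_j}$. I expect these to vanish by the constancy of the principal curvatures $\cot(\theta_1+\frac{k-1}{4}\pi)$ and the relation $J\mathcal{D}_k=\mathcal{D}_{k+2}$.

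\textbf{Main obstacle.} The hardest part will be the mixed terms with $X,Y,Z$ in three different distributions, for instance $X\in\mathcal{D}_1$, $Y\in\mathcal{D}_2$, $Z\in\mathcal{D}_3$ or $\mathcal{D}_4$. These involve the products $P_\alpha Q_\beta$ and their action on $p_\pm$. Skew-symmetry there should reduce to an algebraic identity relating the two Clifford systems: that $P_\alpha Q_\beta$ is skew, and that the $Q$'s commute with $P_0P_1\cdots P_m$ up to sign, coming from the quaternionic or octonionic multiplication.

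I would first verify this in the lowest case $(1,2)$ on $\mathbb{H}^2$ by hand, then use the octonion structure on $\mathbb{O}^2=\mathbb{R}^{16}$ for the other three. This is the step where the restriction to exactly these multiplicities should become essential.
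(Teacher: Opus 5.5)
There is a genuine gap, and it cannot be repaired. The step you call the main obstacle cannot be closed by any Clifford identity. For \emph{every} almost complex structure $J$ that is orthogonal for the induced metric and satisfies $J\mathcal{D}_1=\mathcal{D}_3$, $J\mathcal{D}_2=\mathcal{D}_4$, the nearly K\"ahler identity fails.

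Put $C(X,Y,Z)=\langle(\nabla_XA)Y,Z\rangle$. By Codazzi, $C$ is totally symmetric and vanishes when two arguments lie in the same $\mathcal{D}_k$. Moreover, $\langle\nabla_XW,Z\rangle=C(X,W,Z)/(\lambda_W-\lambda_Z)$ for sections $W,Z$ of different distributions. For $X\in\mathcal{D}_1$ and $Y,Z\in\mathcal{D}_2$ this gives
\[
\bigl\langle(\nabla_XJ)Y+(\nabla_YJ)X,\,Z\bigr\rangle=\frac{C(X,JY,Z)-C(X,Y,JZ)}{\lambda_4-\lambda_2}+\frac{C(X,Y,JZ)}{\lambda_1-\lambda_4}.
\]
The nearly K\"ahler condition also forces $0=\langle(\nabla_YJ)Y,X\rangle=C(X,Y,JY)/(\lambda_4-\lambda_1)$. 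Polarizing in $Y$ gives $C(X,JY,Z)=-C(X,Y,JZ)$. So the left side equals $C(X,Y,JZ)\bigl(\frac{1}{\lambda_1-\lambda_4}-\frac{2}{\lambda_4-\lambda_2}\bigr)$, and the bracket is positive because $\lambda_1>\lambda_2>\lambda_3>\lambda_4$. Hence $C=0$ on $\mathcal{D}_1\times\mathcal{D}_2\times\mathcal{D}_4$.

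Exchanging the roles of $\mathcal{D}_1$ and $\mathcal{D}_2$ gives the bracket $\frac{1}{\lambda_2-\lambda_3}-\frac{2}{\lambda_3-\lambda_1}>0$, so $C=0$ on $\mathcal{D}_1\times\mathcal{D}_2\times\mathcal{D}_3$ as well. Then all connection forms between $\mathcal{D}_1$ and $\mathcal{D}_2$ vanish, as do $\langle\nabla_{e_\alpha}e_a,e_k\rangle$ for $e_k\in\mathcal{D}_3\oplus\mathcal{D}_4$. The structure equations then give $K(e_a,e_\alpha)=0$. This contradicts the Gauss equation, which gives $K(e_a,e_\alpha)=1+\lambda_1\lambda_2=\cos\frac{\pi}{4}/(\sin t_1\sin t_2)\neq0$. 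Note that this obstruction sits in your ``two in one distribution'' class, which you expected to vanish for free.

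For $(1,2)$ this agrees with Gray's theorem. A connected nearly K\"ahler $6$-manifold is either K\"ahler or strict of constant type, hence Einstein. But $\Ric=5+H\lambda_k-\lambda_k^2$ on $\mathcal{D}_k$ (with $H=\operatorname{tr}A$) is not constant when the four $\lambda_k$ are distinct. In fact this hypersurface has $\pi_1(M)=\mathbb{Z}$, so odd $b_1$ rules out K\"ahler and Myers' theorem rules out strict, for any metric.

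So the proposition, read with the induced metric and a $J$ interchanging the distributions in pairs, is false, and your plan cannot be completed. The paper's own argument checks only the diagonal components $G_{iij}$ on a principal frame (its Cases 1--3). That does not yield $(\nabla_X\Phi)(X,\cdot)=0$ for $X$ spread over several distributions. Those are exactly the mixed terms you rightly insisted on, and they do not cancel.

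Two smaller slips. First, $(x\pm\xi)/\sqrt2$ are not focal points; the focal points are $x\cos t_k+\xi\sin t_k$. Second, the components with $X,Y\in\mathcal{D}_j$, $Z\in\mathcal{D}_{j+2}$ do not follow from constancy of the $\lambda_k$ when $m_j\ge 2$. Precomposing $J|_{\mathcal{D}_j}$ with a varying rotation changes them. This is why the paper uses the explicit form $J=\mp P$ and the Maurer--Cartan form $\tau_{a0}$ there.
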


\begin{restatable}{prop}{StarRicVanish}
\label{*ric vanish}
	Let $M$ be an isoparametric hypersurface of OT--FKM type  in the sphere. If $M$ admits an almost Hermitian structure mapping $\mathcal{D}_{1}$ to $\mathcal{D}_{3}$ and $\mathcal{D}_{2}$ to $\mathcal{D}_{4}$ with respect to the induced metric, then the corresponding $*$-Ricci curvature vanishes identically.
\end{restatable}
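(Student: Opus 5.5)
We would work with the standard definition of $*$-Ricci curvature of an almost Hermitian manifold $(M,g,J)$:
\[
\Ric^*(X,Y)=\sum_{i=1}^{n} R(X,e_i,JY,Je_i),
\]
up to the usual normalization, where $\{e_i\}$ is a local orthonormal frame. Our plan is to evaluate this sum in a frame adapted to the principal distributions. We then use the Gauss equation to show that every term cancels.

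First, we choose a local orthonormal frame of the form $\{e_i\}\cup\{Je_i\}$, with the $e_i$ running over orthonormal bases of $\mathcal{D}_1$ and $\mathcal{D}_2$. This is legitimate because $J$ is orthogonal and maps $\mathcal{D}_1\to\mathcal{D}_3$ and $\mathcal{D}_2\to\mathcal{D}_4$. Since $J^2=-1$, it also maps $\mathcal{D}_3\to\mathcal{D}_1$ and $\mathcal{D}_4\to\mathcal{D}_2$.

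The Gauss equation for $M\subset S^{n+1}(1)$ gives
\[
R(X,Y,Z,W)=\innpr{Y}{Z}\innpr{X}{W}-\innpr{X}{Z}\innpr{Y}{W}+\innpr{AY}{Z}\innpr{AX}{W}-\innpr{AX}{Z}\innpr{AY}{W}.
\]
With this in hand, we compute $\Ric^*(X,Y)$ term by term for $X\in\mathcal{D}_a$ and $Y\in\mathcal{D}_b$, and extend by bilinearity.

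The constant-curvature part contributes
\[
\sum_i\bigl(\innpr{e_i}{JY}\innpr{X}{Je_i}-\innpr{X}{JY}\innpr{e_i}{Je_i}\bigr)=\innpr{X}{Y},
\]
because $\innpr{e_i}{Je_i}=0$ and $\sum_i\innpr{X}{Je_i}\,e_i=-JX$.

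For the shape operator part, write $\lambda(\cdot)$ for the principal curvature on each distribution. Then
\[
\sum_i\bigl(\innpr{Ae_i}{JY}\innpr{AX}{Je_i}-\innpr{AX}{JY}\innpr{Ae_i}{Je_i}\bigr).
\]
In the second term, $Ae_i\in\mathcal{D}_k$ while $Je_i\in\mathcal{D}_{k+2}$. These are orthogonal, so the term drops out. The first term equals $-\lambda_X\innpr{A JY}{JX}$, which is $-\lambda_a\lambda_{a+2}\innpr{X}{Y}$.

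Hence $\Ric^*(X,Y)=(1+\lambda_a\lambda_{a+2})\innpr{X}{Y}$ for $X,Y\in\mathcal{D}_a$, and the components between different distributions vanish by orthogonality. The decisive input is Münzner's formula $\lambda_k=\cot(\theta_1+\tfrac{k-1}{4}\pi)$ with $g=4$. It gives $\lambda_{k+2}=\cot(\theta+\tfrac{\pi}{2})=-\tan\theta$ where $\lambda_k=\cot\theta$, so $\lambda_k\lambda_{k+2}=-1$ and every coefficient vanishes.

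The main obstacle is getting the signs right. The cancellation depends on the relative sign between the Gauss-equation shape term and the curvature term, and on the convention chosen for $\Ric^*$. We would fix the convention $R(X,Y,Z,W)=\innpr{R(X,Y)W}{Z}$, or whichever the paper uses, and check it against the identity $\Ric^*=\Ric$ for a Kähler metric. In particular, the constant-curvature term must come out as $+\innpr{X}{Y}$ and the shape term as $+\lambda_a\lambda_{a+2}\innpr{X}{Y}$, so that they cancel rather than add.

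Note that the OT--FKM hypothesis enters only through $g=4$ and the existence of such a $J$. The computation is purely pointwise and needs no derivatives of $J$.
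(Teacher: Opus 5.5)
Your route is the paper's. The paper also reduces everything to the Gauss equation, packaged as $*\Ric(X,Y)=\innpr{X}{Y}-\innpr{JA_\xi JA_\xi X}{Y}$, and then uses $JA_\xi JA_\xi=\lambda_k\lambda_{k+2}J^2=\mathrm{Id}$ on each $\mathcal{D}_k$. Your direct evaluation on principal vectors is the same computation.

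However, your constant-curvature step has a sign error that your ``Hence'' line silently undoes. You use the identity $\sum_i\innpr{X}{Je_i}e_i=-JX$, which needs the sum to run over the whole orthonormal frame of $T_pM$ (all $2n$ vectors, not $i=1,\dots,n$). With that identity and your own Gauss equation, the first term is
\[
\sum_i\innpr{e_i}{JY}\innpr{X}{Je_i}=\innpr{-JX}{JY}=-\innpr{X}{Y},
\]
not $+\innpr{X}{Y}$. This exactly parallels your shape term, which you computed correctly as $-\lambda_a\innpr{AJY}{JX}=-\lambda_a\lambda_{a+2}\innpr{X}{Y}$. The two pieces as you wrote them would add to $(1-\lambda_a\lambda_{a+2})\innpr{X}{Y}=2\innpr{X}{Y}$. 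The correct total is $-(1+\lambda_a\lambda_{a+2})\innpr{X}{Y}$, which vanishes because $\lambda_a\lambda_{a+2}=-1$.

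Your last paragraph also gets the role of conventions wrong: the relative sign is fixed by the Gauss equation. The shape part of the Gauss equation is the metric part with $X,Y$ replaced by $AX,AY$. So your two contributions always carry the same prefactor, namely $-\innpr{JX}{JY}$ and $-\innpr{JAX}{AJY}$. Conventions for $R$ or $*\Ric$ only flip the overall sign; yours ends up opposite to the paper's, which is harmless. With this correction your argument is complete.
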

In general, the existence of an isomorphism between $\mathcal{D}_{2}$ and $\mathcal{D}_{4}$ remains an open question. A natural relaxation of Problem \ref{prob1.1} is to ask:
\begin{prob}\label{prob1.2}
Is the direct sum $\mathcal{D}_{1}\oplus\mathcal{D}_{2}$ isomorphic to $\mathcal{D}_{3}\oplus\mathcal{D}_{4}$?
\end{prob}

In the present paper, we provide a partial affirmative answer to Problem \ref{prob1.2} for isoparametric hypersurfaces of OT--FKM type. The main result is as follows:
\begin{restatable}{thm}{MainThm}
\label{main thm}
	For isoparametric hypersurfaces of OT--FKM type, if $m$ is odd, there exists a global vector bundle isomorphism from $\mathcal{D}_{1}\oplus \mathcal{D}_{2}$ to $\mathcal{D}_{3}\oplus \mathcal{D}_{4}$.
\end{restatable}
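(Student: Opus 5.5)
The plan is to construct the isomorphism pointwise and explicitly, using the Clifford system $\{P_0,\dots,P_m\}$ on $\mathbb{R}^{2l}$ that defines the OT--FKM hypersurface, rather than arguing through characteristic classes. The oddness of $m$ will enter through the product $Q=P_0P_1\cdots P_m$.

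\textbf{Step 1: explicit description of the distributions.} First I write $M\subset S^{2l-1}$ as a level set of $F(x)=|x|^4-2\sum_{i=0}^m\langle P_ix,x\rangle^2$. At $x\in M$ I decompose $x=\cos t\,x_++\sin t\,\xi_+$, with $x_+\in M_+$ on the focal submanifold $M_+=\{\langle P_ix,x\rangle=0\}$ and $\xi_+$ a unit normal of $M_+$. Following the standard FKM computations, as in the earlier description of $\mathcal{D}_1\cong\mathcal{D}_3$ from \cite{Q-T-Y 2}, I express each $\mathcal{D}_k$ at $x$ in terms of two pieces:
\begin{itemize}
\item the span of the vectors $P_ix$ and $P_iP_jx$, projected onto $T_xM$;
\item the eigenspaces $E_\pm(x)$ of the shape operators of $M_+$ in the normal direction $\xi_+$.
\end{itemize}
The aim is a formula for $\mathcal{D}_1\oplus\mathcal{D}_2$ and for $\mathcal{D}_3\oplus\mathcal{D}_4$ as the $\pm$-parts of a single natural symmetric endomorphism of $T_xM$. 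I expect this to be a suitable combination of the shape operator $A$ and of the involution induced by $x\mapsto\xi(x)$, which interchanges $\lambda_k\leftrightarrow\lambda_{k+2}$ up to reordering.

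\textbf{Step 2: the role of $Q$ when $m$ is odd.} Since $m$ is odd, $m+1$ is even, so commuting $P_i$ past the other $m$ factors gives $P_iQ=-QP_i$ for every $i$. In addition, $Q$ is orthogonal and $Q^2=\pm\mathrm{Id}$. Hence $x\mapsto Qx$ satisfies $\langle P_iQx,Qx\rangle=-\langle P_ix,x\rangle$, so it preserves $F$ and maps $M$ to itself, while reversing the sign of the Clifford data. I then plan to define
\[
\Phi_x(v)=\pi_{\mathcal{D}_3\oplus\mathcal{D}_4}\bigl(\,\mathcal{L}_x v\,\bigr),
\qquad v\in(\mathcal{D}_1\oplus\mathcal{D}_2)_x,
\]
where $\mathcal{L}_x$ is a linear map built from $Q$, from the reflection $x\leftrightarrow\xi(x)$, and from $\sum_i\langle P_ix,x\rangle P_i$. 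The map $\mathcal{L}_x$ is chosen so that its dependence on $x$ is polynomial, which makes $\Phi$ a smooth bundle map globally. The point of using $Q$ is that it removes the need to choose a normal frame $\{P_ix\}$ or a sign locally; such choices are exactly what obstructs the naive pairing $\mathcal{D}_2\to\mathcal{D}_4$.

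\textbf{Step 3: injectivity.} I verify that $\Phi_x$ is injective at every point; since $\dim(\mathcal{D}_1\oplus\mathcal{D}_2)=\dim(\mathcal{D}_3\oplus\mathcal{D}_4)$, it is then an isomorphism. Concretely, I compute $\langle\Phi_xv,\Phi_xv\rangle$ and aim to show it is bounded below by $c(t)|v|^2$ with $c(t)>0$. This uses the Clifford relations $P_iP_j+P_jP_i=2\delta_{ij}\mathrm{Id}$ and the anticommutation $QP_i=-P_iQ$.

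\textbf{Main obstacle.} I expect the hard part to be the $\mathcal{D}_2$-component, especially at points where $\mathcal{D}_2$ meets the $P_iP_jx$-directions degenerately, that is, along the special ``singular'' strata in $M_+$. There, the $\mathcal{D}_2$-part of $\mathcal{L}_xv$ might be projected to zero. My first attempt is to add to $\mathcal{L}_x$ a correction term involving $Q$ applied to $\xi_+$. I would then check that the cross terms cancel because $Q$ anticommutes with every $P_i$, and it is exactly this cancellation that needs $m$ odd.
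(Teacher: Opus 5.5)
\textbf{There is a genuine gap.} The linear map $\mathcal{L}_x$, on which Steps 2 and 3 entirely depend, is never specified. The ingredients you list cannot produce it on their own:
\begin{itemize}
\item The map $x\mapsto Qx$ is an ambient isometry with $\xi(Qx)=Q\xi(x)$. It therefore carries $\mathcal{D}_k|_x$ onto $\mathcal{D}_k|_{Qx}$ for every $k$; it covers a nontrivial diffeomorphism and never exchanges $\mathcal{D}_1\oplus\mathcal{D}_2$ with $\mathcal{D}_3\oplus\mathcal{D}_4$.
\item $A$ and $\diff\xi=-A$ preserve each $\mathcal{D}_k$.
\item Following a linear map by the orthogonal projection onto $\mathcal{D}_3\oplus\mathcal{D}_4$ gives no reason for injectivity, so Step 3 has nothing concrete to check.
\end{itemize}

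The structural fact you are missing concerns the Clifford element $P(x)=\frac{1}{\sin 2\theta}\sum_i\innpr{P_ix}{x}P_i$ acting on all of $\mathbb{R}^{2l}$. Its $\pm1$-eigenspaces split orthogonally as $E_+(P)=\frac{I+P}{\sqrt2}(\mathbb{R}\varphi_1\oplus\mathcal{D}_1)\oplus\mathcal{D}_2$ and $E_-(P)=\frac{I-P}{\sqrt2}(\mathbb{R}\varphi_1\oplus\mathcal{D}_1)\oplus\mathcal{D}_4$. Any continuous family of orthogonal operators anticommuting with $P(x)$ then gives an isometry $\mathbb{R}\varphi_1\oplus\mathcal{D}_1\oplus\mathcal{D}_2\to\mathbb{R}\varphi_1\oplus\mathcal{D}_1\oplus\mathcal{D}_4$. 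Finally, $\mathcal{D}_1\cong\mathcal{D}_3$ via $P$.

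Your $Q=P_0\cdots P_m$ does anticommute with $P(x)$ when $m$ is odd, so it yields that isomorphism. However, it comes with an extra trivial line $\mathbb{R}\varphi_1$ on both sides. The rank $l-1$ is below $\dim M=2l-2$, so no general cancellation theorem applies; the line must be cancelled by hand. Your plan has no mechanism for this, and the degeneracy on special strata that you anticipate is not where the difficulty lies.

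The paper handles this as follows:
\begin{itemize}
\item It takes $R_1(x)=V(P(x))$, where $V$ is a nowhere-vanishing tangent field on the Clifford sphere $\Sigma\cong S^m$. This is exactly where oddness of $m$ enters.
\item Then $R_1\varphi_1$ is a nowhere-zero section of $\mathcal{D}_1$.
\item The induced isometry swaps $\varphi_1$ and $R_1\varphi_1$. Hence it restricts to $\mathcal{D}_1^0\oplus\mathcal{D}_2\cong\mathcal{D}_1^0\oplus\mathcal{D}_4$, where $\mathcal{D}_1^0=(R_1\varphi_1)^\perp\cap\mathcal{D}_1$.
\end{itemize}

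For $m=1$ one has $QP=R_1$, so your $Q$ agrees with the paper's map on $E_+(P)$. For $m\ge 3$, however, $QP\notin\mathrm{Span}\{P_i\}$, and $Q$ does not respect the trivial summands. Here is a counterexample for $m=3$:
\begin{itemize}
\item Work on $\mathbb{H}^2\oplus\mathbb{H}^2$ with $P_0(u,v)=(u,-v)$, $P_1(u,v)=(v,u)$, $P_2(u,v)=(iv,-iu)$, $P_3(u,v)=(jv,-ju)$. Then $Q(u,v)=(-kv,ku)$.
\item Take $\varphi_1=\frac{1}{\sqrt2}((1,0),(0,1))$ and $P=P_0$. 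Here $\mathcal{D}_4=\{(0,w): w\perp(1,0),(i,0),(j,0),(0,1)\}$.
\item One gets $\frac{I-P}{\sqrt2}Q\varphi_1=(0,(k,0))\in\mathcal{D}_4$.
\end{itemize}
So the trivial line is sent entirely into $\mathcal{D}_4$, and the cancellation breaks down.
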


The paper is organized as follows. In Section 2, we analyze the structural properties of principal distributions on OT--FKM type isoparametric hypersurfaces and present the proof of Theorem \ref{main thm}. Section 3 is devoted to applications of the isomorphisms between principal distributions within the framework of almost Hermitian geometry, where we investigate specific almost complex structures, the existence of nearly K\"ahler structures, and the bahavior of the $*$-Ricci curvatures.

\section{Principal Distributions}

We begin by briefly reviewing the construction of isoparametric hypersurfaces of OT--FKM type, as introduced in \cite{F-K-M}. 

A \textit{symmetric Clifford system} on $\mathbb{R}^{2l}$ is defined as an $(m+1)$-tuple $\{P_{0},\dots,P_{m}\}$ of symmetric orthogonal matrices satisfying $P_{i}P_{j}+P_{j}P_{i}=2\delta_{ij}I$. The dimension $2l$ is constrained by the representation theory of Clifford algebras; specifically, $l = k\delta(m)$ for some positive integer $k$, where $\delta(m)$ denotes the dimension of the irreducible module of the Clifford algebra $\mathcal{C}_{m-1}$. The values of $\delta(m)$ satisfy the periodicity condition $\delta(m+8) = 16\delta(m)$, and are given as follows:

\begin{table}[h]
	\centering
	\begin{tabular}{c ccccccccc} 
		\toprule
		$m$ & 1 & 2 & 3 & 4 & 5 & 6 & 7 & 8 & $\cdots m+8$\\
		\midrule
		$\delta(m)$ & 1 & 2 & 4 & 4 & 8 & 8 & 8 & 8& $\cdots 16\delta(m)$ \\
		\bottomrule
	\end{tabular}
\end{table}

Associated with a symmetric Clifford system is the Cartan--M\"unzner polynomial $F: \mathbb{R}^{2l} \to \mathbb{R}$, which is a homogeneous polynomial of degeree $4$ defined in \cite{F-K-M} as:
\begin{equation}
	F(x) = |x|^{4} - 2\sum_{i=0}^{m} \langle P_{i}x, x \rangle^{2}.
\end{equation}
The restriction of $F$ to the unit sphere $S^{2l-1}$ yields a function $f = F|_{S^{2l-1}}$ with image $\mathrm{Im}(f) = [-1,1]$. An isoparametric hypersurface $M^{2l-2}$ is obtained as a regular level set of $f$, specifically $M = f^{-1}(\cos 4\theta)$ for a constant $\theta \in (0, \tfrac{\pi}{4})$. The hypersurface $M$ possesses four distinct principal curvatures given by $\lambda_{k} = \cot(\theta + \tfrac{k-1}{4}\pi)$ for $k=1,\dots,4$. The associated multiplicities are $(m_1, m_2, m_1, m_2)$, where $m_{1}=m$ and $m_{2}=l-m-1$.

The function $f$ admits two singular level sets, $M_{\pm}=f^{-1}(\pm1)$, referred to as the focal submanifolds. These are minimal submanifolds of the unit sphere. The geometry of the isoparametric family is characterized by the fibrations $M \to M_{\pm}$, where the fibers are $S^{m_1}$ and $S^{m_2}$, respectively. Crucially, the normal bundle of $M_{+}$ in $S^{2l-1}$ is trivial and admits a global orthonormal frame $\{P_{0}x, \dots, P_{m}x\}$ at any $x\in M_+$.

\subsection{Isomorphisms of Principal Distributions}\
Let $M=f^{-1}(\cos 4\theta)$ be an isoparametric hypersurface, and let $\xi(x)=\frac{\nabla^{S} f}{|\nabla^{S} f|}$ denote the unit normal vector at $x \in M$, where $\nabla^{S}$ is the spherical gradient. Explicitly, 
\begin{equation}
	\xi(x) = \frac{1}{\cos 2\theta}\left( x\sin 2\theta - \frac{1}{\sin 2\theta}\sum_{i=0}^{m}\langle P_{i}x, x \rangle P_{i}x \right).
\end{equation}
Let $\{e_{1}, \dots, e_{2l-2}\}$ be a local orthonormal frame of principal directions, hereafter referred to as a \textit{local principal frame}. Then we have
\begin{align*}
	\mathrm{d} x = \omega_{i}e_{i}, \quad \mathrm{d}\xi = -\lambda_{i}\omega_{i}e_{i},
\end{align*}
where $\omega_{i}$ is the 1-form dual to $e_{i}$ and $\lambda_{i}$ is the corresponding principal curvature.

For $k \in \{1,2,3,4\}$, let $t_{k} = \theta + \frac{k-1}{4}\pi$. We consider the parallel transportation  map $\varphi_{k}: M \to S^{2l-1}$ at distance $t_k$ along the normal direction $\xi(x)$ and the associated unit normal vector field $\xi_{k}: M \to S^{2l-1}$ defined by
\begin{equation}\label{def of phi_k xi_k}
	\begin{aligned}
		\varphi_{k}(x) &:= x\cos t_{k} + \xi(x)\sin t_{k},\\
		\xi_{k}(x) &:= -x\sin t_{k} + \xi(x)\cos t_{k}.
	\end{aligned}
\end{equation}
It is well-known that $\varphi_{1}(M) = \varphi_{3}(M) = M_{+}$ and $\varphi_{2}(M) = \varphi_{4}(M) = M_{-}$.

We introduce an operator $P: \mathbb{R}^{2l} \to \mathbb{R}^{2l}$ defined by
\begin{equation}\label{def of P} 
	P = \frac{1}{\sin 2\theta}\sum_{i=0}^{m}\langle P_{i}x, x \rangle P_{i}.
\end{equation}
Note that $P^{2}=I$, and $P$ lies on the unit sphere of the linear space $\mathrm{Span}\{P_{0},\dots,P_{m}\}$. We refer to this sphere as the \emph{Clifford sphere}, denoted by $\Sigma = \Sigma(P_{0}, \dots, P_{m})$. A straightforward computation yields the following identities:
\begin{equation}\label{phi_{k}} 
	\begin{aligned} 
		\varphi_{1}(x) &= \frac{1}{\cos 2\theta}(x\cos \theta - Px\sin \theta), \\
		\xi_{1}(x) &= \frac{1}{\cos 2\theta}(x\sin \theta - Px\cos \theta) = -P\varphi_{1}(x), \\
		\varphi_{2}(x) &= \frac{I-P}{\sqrt{2}}\varphi_{1}(x), \quad \xi_{2}(x) = -\frac{I+P}{\sqrt{2}}\varphi_{1}(x),\\
		\varphi_{3}(x) &= -P\varphi_{1}(x), \quad \quad \, \xi_{3}(x) = -\varphi_{1}(x),\\
		\varphi_{4}(x) &= -\frac{I+P}{\sqrt{2}}\varphi_{1}(x), \quad \xi_{4}(x) = -\frac{I-P}{\sqrt{2}}\varphi_{1}(x).
	\end{aligned}
\end{equation}	

\begin{lem}\label{four principal distributions}
	The four principal distributions satisfy the following decomposition relations regarding the normal spaces of the focal submanifolds:
	\begin{align*}
		N_{\varphi_{1}(x)}M_{+} &= \mathcal{D}_{1}|_{x} \oplus \mathrm{Span}\{\xi_{1}(x)\},\\
		N_{\varphi_{2}(x)}M_{-} &= \mathcal{D}_{2}|_{x} \oplus \mathrm{Span}\{\xi_{2}(x)\},\\
		N_{\varphi_{3}(x)}M_{+} &= \mathcal{D}_{3}|_{x} \oplus \mathrm{Span}\{\xi_{3}(x)\},\\
		N_{\varphi_{4}(x)}M_{-} &= \mathcal{D}_{4}|_{x} \oplus \mathrm{Span}\{\xi_{4}(x)\},
	\end{align*}
	where $N_{p}M_{\pm}$ denotes the normal space of $M_{\pm}$ in $S^{2l-1}$ at $p$.
\end{lem}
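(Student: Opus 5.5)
The plan is to use the standard structure theory of parallel hypersurfaces and focal maps. Fix $k$ and differentiate $\varphi_k(x)=x\cos t_k+\xi(x)\sin t_k$ along a principal direction $e_i\in\mathcal{D}_j$. Using $\mathrm{d}x=\omega_i e_i$ and $\mathrm{d}\xi=-\lambda_i\omega_i e_i$ we get
\begin{equation*}
\mathrm{d}\varphi_k(e_i)=(\cos t_k-\lambda_j\sin t_k)\,e_i=\frac{\sin(t_j-t_k)}{\sin t_j}\,e_i ,
\end{equation*}
since $\lambda_j=\cot t_j$. This vanishes exactly when $j=k$ and is a nonzero multiple of $e_i$ for $j\neq k$, because the $t_j$ lie in $(0,\pi)$ and differ by multiples of $\pi/4$.

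It follows that the image $\mathrm{d}\varphi_k(T_xM)$ equals $\bigoplus_{j\neq k}\mathcal{D}_j|_x$, translated in $\mathbb{R}^{2l}$ to the point $\varphi_k(x)$. Since $\varphi_k:M\to M_\pm$ is a submersion (a sphere bundle with fibers $S^{m_1}$ or $S^{m_2}$, as recalled above), this image is the full tangent space $T_{\varphi_k(x)}M_\pm$. A dimension count confirms it: $\dim M_+=2l-2-m_1$ and $\dim M_-=2l-2-m_2$.

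Next, take the orthogonal complement inside $T_{\varphi_k(x)}S^{2l-1}=\varphi_k(x)^\perp$. We have
\begin{equation*}
\mathbb{R}^{2l}=\mathrm{Span}\{x,\xi\}\oplus\mathcal{D}_1\oplus\cdots\oplus\mathcal{D}_4
\end{equation*}
orthogonally, and $\mathrm{Span}\{x,\xi\}=\mathrm{Span}\{\varphi_k,\xi_k\}$ with $\varphi_k\perp\xi_k$. Hence
\begin{equation*}
N_{\varphi_k(x)}M_\pm=\varphi_k(x)^\perp\cap\Big(\bigoplus_{j\neq k}\mathcal{D}_j\Big)^\perp=\mathcal{D}_k|_x\oplus\mathrm{Span}\{\xi_k(x)\}.
\end{equation*}
The dimension check gives $m_k+1=\operatorname{codim}M_\pm$ in $S^{2l-1}$, which is consistent.

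Finally, assign focal submanifolds to indices using the identity $\varphi_1(M)=\varphi_3(M)=M_+$ and $\varphi_2(M)=\varphi_4(M)=M_-$ recalled before the lemma. This gives the four stated relations.

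The only delicate point is justifying that $\mathrm{d}\varphi_k(T_xM)$ is all of $T M_\pm$, rather than merely contained in it. The dimension count settles this, given the known codimensions $m_1+1$ and $m_2+1$ of the focal submanifolds.
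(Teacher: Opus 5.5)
Your proposal is correct and follows essentially the same route as the paper's proof. Both differentiate $\varphi_k$, note that the coefficient $\sin(t_j-t_k)/\sin t_j$ vanishes exactly on $\mathcal{D}_k$, and take orthogonal complements using $\mathrm{Span}\{x,\xi\}=\mathrm{Span}\{\varphi_k,\xi_k\}$. Your dimension count (that $\mathrm{d}\varphi_k(T_xM)$ is all of $T_{\varphi_k(x)}M_\pm$) spells out a step the paper only asserts, and treating general $k$ at once replaces the paper's appeal to ``analogously.''
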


\begin{proof}
	Differentiation of $\varphi_{1}$ yields
	\begin{align*}
		\mathrm{d}\varphi_{1} = (\cos\theta - \lambda_{i}\sin\theta)\omega_{i}e_{i} = \frac{\sin(\theta_{i}-\theta)}{\sin\theta_{i}}\omega_{i}e_{i},
	\end{align*}
	where $\theta_{i}$ is determined by $\lambda_{i} = \cot\theta_{i}$. Since the principal curvatures of $M=f^{-1}(\cos4\theta)$ are given by $\lambda_{k} = \cot(\theta + \frac{k-1}{4}\pi)$, the coefficient $\sin(\theta_i - \theta)$ vanishes precisely for vectors in $\mathcal{D}_1$. Consequently, the tangent space of $M_+=\varphi_{1}(M)$ at $\varphi_1(x)$ is spanned by the remaining distributions:
	$$
	T_{\varphi_{1}(x)}M_{+} = (\mathcal{D}_{2} \oplus \mathcal{D}_{3} \oplus \mathcal{D}_{4})|_{x}.
	$$
	Decomposing the ambient space $\mathbb{R}^{2l}$ appropriately, we obtain
	$$
	\mathrm{Span}\{ \varphi_{1}(x)\} \oplus N_{\varphi_{1}(x)}M_{+} = \mathcal{D}_{1}|_{x} \oplus \mathrm{Span}\{ x, \xi(x)\}
	= \mathcal{D}_{1}|_{x} \oplus \mathrm{Span}\{ \varphi_{1}(x), \xi_{1}(x)\}.
	$$
	This implies
	\begin{align*} 
		N_{\varphi_{1}(x)}M_{+} = \mathcal{D}_{1}|_{x} \oplus \mathrm{Span}\{ \xi_{1}(x)\}.
	\end{align*}
	The decompositions for the other three principal distributions follow analogously.
\end{proof}

As a natural corollary of Lemma \ref{four principal distributions}, we obtain the following result, which was previously established by Qian, Tang, and Yan \cite{Q-T-Y 2}. The proof presented here provides an alternative approach utilizing the geometry of the Clifford sphere: 
\begin{cor}\label{D_{1} and D_{3}}
	For any isoparametric hypersurface of OT--FKM type, the distributions $\mathcal{D}_{1}$ and $\mathcal{D}_{3}$ are isomorphic.
\end{cor}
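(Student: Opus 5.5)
Since $\varphi_{3}(x)=-P\varphi_{1}(x)$ and both $\varphi_{1}(x),\varphi_{3}(x)$ lie on $M_{+}$, we use the global structure of the normal bundle of $M_{+}$ and then apply Lemma \ref{four principal distributions} at both points. At a point $p\in M_{+}$ the normal space is $N_{p}M_{+}=\mathrm{Span}\{P_{0}p,\dots,P_{m}p\}=\{Qp : Q\in \mathrm{Span}\{P_0,\dots,P_m\}\}$. The candidate map is the one induced by $P$ itself, viewed as a linear isometry of $\mathbb{R}^{2l}$ depending smoothly on $x\in M$. Since $P$ is symmetric and orthogonal with $P^{2}=I$, it carries $\varphi_{1}(x)$ to $-\varphi_{3}(x)$ and $\xi_{1}(x)=-P\varphi_{1}(x)$ to $-\varphi_{1}(x)=\xi_{3}(x)$.

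\textbf{Step 1: $P$ maps $N_{\varphi_{1}(x)}M_{+}$ onto $N_{\varphi_{3}(x)}M_{+}$.} Take a normal vector $Q\varphi_{1}(x)$ with $Q\in\mathrm{Span}\{P_i\}$. Using the Clifford relation $PQ=-QP+2\langle P,Q\rangle I$, where $\langle\cdot,\cdot\rangle$ is the inner product making $\{P_i\}$ orthonormal, we get
\[
PQ\varphi_{1}=-QP\varphi_{1}+2\langle P,Q\rangle\varphi_{1}
= Q\varphi_{3}-2\langle P,Q\rangle P\varphi_{3}.
\]
Since $Q-2\langle P,Q\rangle P\in\mathrm{Span}\{P_i\}$, this vector lies in $N_{\varphi_{3}(x)}M_{+}$. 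Because $P$ is injective and both spaces have dimension $m+1$, $P$ is an isomorphism between them. Geometrically, $Q\mapsto Q-2\langle P,Q\rangle P$ is the reflection of the Clifford sphere $\Sigma$ in the hyperplane $P^{\perp}$.

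\textbf{Step 2: pass to the principal distributions.} By Lemma \ref{four principal distributions}, $\mathcal{D}_1|_x$ is the orthogonal complement of $\xi_1(x)$ in $N_{\varphi_1(x)}M_+$, and $\mathcal{D}_3|_x$ is the orthogonal complement of $\xi_3(x)$ in $N_{\varphi_3(x)}M_+$. Since $P$ is orthogonal and sends $\xi_{1}(x)\mapsto\xi_{3}(x)$, it maps orthogonal complements to orthogonal complements. Hence $P|_{\mathcal{D}_1|_x}:\mathcal{D}_1|_x\to\mathcal{D}_3|_x$ is a linear isometry.

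\textbf{Step 3: smoothness and globality.} $P=P(x)$ is given by the explicit smooth formula \eqref{def of P} on all of $M$, so the fiberwise isometries assemble into a smooth global bundle map $\mathcal{D}_1\to\mathcal{D}_3$. This is the required isomorphism.

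The main point needing care is the Clifford identity in Step 1, that is, confirming that the image stays inside the normal space at $\varphi_3(x)$ rather than just some $(m+1)$-plane. It reduces to the anticommutation relation and the fact that $|P|=1$ in $\mathrm{Span}\{P_i\}$; this is exactly why $P$ lies on $\Sigma$ and satisfies $P^2=I$.
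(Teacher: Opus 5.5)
Your proof is correct and is essentially the paper's argument: the isomorphism is the operator $P=R_0$ itself, justified by Lemma~\ref{four principal distributions} together with the Clifford anticommutation relation. The only difference is presentation. The paper picks a local orthonormal frame $R_0=P,R_1,\dots,R_m$ of $\mathrm{Span}\{P_i\}$, writes $\mathcal{D}_1|_x=\mathrm{Span}\{R_a\varphi_1(x)\}$ and $\mathcal{D}_3|_x=\mathrm{Span}\{R_a\varphi_3(x)\}$, and checks $PR_a\varphi_1=R_a\varphi_3$; you argue frame-free, via the reflection $Q\mapsto Q-2\langle P,Q\rangle P$ and the identity $P\xi_1=\xi_3$.
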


\begin{proof}
	From Lemma \ref{four principal distributions} and Equation \eqref{phi_{k}}, we have
	$$
	N_{\varphi_{1}(x)}M_{+} = \mathcal{D}_{1}|_{x} \oplus \mathrm{Span}\{ \xi_{1}(x)\} = \mathcal{D}_{1}|_{x} \oplus \mathrm{Span}\{ P\varphi_{1}(x)\}.
	$$
	The normal space of $M_+$ at $\varphi_{1}(x)$ in $S^{2l-1}$ is spanned by $\{P_0 \varphi_{1}(x), \dots, P_m \varphi_{1}(x)\}$. We choose an orthonormal basis $\{R_0, \dots, R_m\}$ for the space $\mathrm{Span}\{P_0, \dots, P_m\}$ such that $R_0 = P$. Specifically, we may write 
	\begin{equation}\label{R_{0}=P}
		(R_0, \dots, R_m) = (P_0, \dots, P_m) A(x) \quad \mathrm{with} \ R_0 = P 
	\end{equation}
	for some $A(x) \in SO(m+1)$. Thus
	$$
	\mathrm{Span}\{ R_0 \varphi_{1}(x),\dots,R_m \varphi_{1}(x)\}= N_{\varphi_{1}(x)} M_{+} = \mathcal{D}_{1}|_x \oplus \mathrm{Span}\{ R_0 \varphi_{1}(x) \},
	$$
	which implies
	\begin{equation}\label{D_1}
		\mathcal{D}_{1}|_x = \mathrm{Span}\{ R_1 \varphi_{1}(x), \dots, R_m \varphi_{1}(x) \}.
	\end{equation}
	Similarly, one finds
	\begin{equation}\label{D_3}
		\mathcal{D}_{3}|_x = \mathrm{Span}\{ R_1 \varphi_{3}(x), \dots, R_m \varphi_{3}(x) \}.
	\end{equation}
	Since $\{R_a\}_{a=1}^m$ are orthogonal to $R_0 = P$ in the Clifford system, they anti-commute with $P$. Using \eqref{phi_{k}}, we observe $R_a \varphi_{3} = -R_a R_0 \varphi_{1} = R_0 R_a \varphi_{1}$. Therefore, the map $R_0 = P$ induces a global vector bundle isomorphism between $\mathcal{D}_{1}$ and $\mathcal{D}_{3}$.

\end{proof}

According to \cite{F-K-M}, there are exactly four pairs of congruent isoparametric families with multiplicity pairs $(m_1, m_2)$:
$$
(1,2) \leftrightarrow (2,1), \quad (1,6) \leftrightarrow (6,1), \quad (2,5) \leftrightarrow (5,2), \quad (3,4) \leftrightarrow (4,3)^{\mathrm{ind}}.
$$
Here, $(4,3)^{\mathrm{ind}}$ denotes the indefinite case, which is not congruent to the definite case $(4,3)^{\mathrm{def}}$, despite sharing the same multiplicities. Duality implies that $M_{-}$ in a case $(m_1, m_2)$ is congruent to $M'_{+}$ in the dual case $(m_2, m_1)$. Consequently, if $M'_+$ has a trivial normal bundle, so does $M_-$. This leads to the following result:

\begin{cor}\label{D_{2}and D_{4}}
	For the cases $(m_{1},m_{2}) \in \{(1,2), (1,6), (2,5), (3,4)\}$, the principal distributions $\mathcal{D}_{2}$ and $\mathcal{D}_{4}$ are isomorphic.
\end{cor}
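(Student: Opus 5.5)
We plan to mirror the proof of Corollary \ref{D_{1} and D_{3}}, with the focal submanifold $M_-$ in place of $M_+$, using the congruence with the dual family to obtain a global orthonormal normal frame of $M_-$.

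\emph{Step 1: a normal frame for $M_-$.} In each listed case, the duality of \cite{F-K-M} gives an isometry $\Phi$ of $S^{2l-1}$ (restricted from an orthogonal map of $\mathbb{R}^{2l}$) carrying $M_-$ onto the focal submanifold $M'_+$ of the dual family with multiplicities $(m_2,m_1)$. That family comes from a symmetric Clifford system $\{Q_0,\dots,Q_{m_2}\}$ on the same $\mathbb{R}^{2l}$, after conjugating by $\Phi$. Hence $N_yM_-=\mathrm{Span}\{Q_0y,\dots,Q_{m_2}y\}$ for $y\in M_-$. Moreover, the parallel hypersurface $M$ is a member of the dual isoparametric family, up to relabelling $\theta\mapsto \pi/4-\theta$. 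In particular $\varphi_2(x)$ and $\varphi_4(x)$ both lie in $M_-$ and are antipodal points of the normal great sphere through $x$ at distance $\pi/2$.

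\emph{Step 2: describe $\mathcal{D}_2$ and $\mathcal{D}_4$.} By Lemma \ref{four principal distributions},
\[
N_{\varphi_2(x)}M_-=\mathcal{D}_2|_x\oplus\mathrm{Span}\{\xi_2(x)\}, \qquad N_{\varphi_4(x)}M_-=\mathcal{D}_4|_x\oplus\mathrm{Span}\{\xi_4(x)\}.
\]
Applying the construction of the operator $P$ in \eqref{def of P} to the dual Clifford system and the level set $M$ yields $Q:=\frac{1}{\sin 2\theta'}\sum_j\langle Q_jx,x\rangle Q_j$, a point of the Clifford sphere $\Sigma(Q_0,\dots,Q_{m_2})$. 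By the dual version of \eqref{phi_{k}}, the roles of $(\varphi_1,\varphi_3)$ are now played by $(\varphi_2,\varphi_4)$ or $(\varphi_4,\varphi_2)$, depending on the orientation of $\xi$. Thus we have $\varphi_4=\pm Q\varphi_2$, and $\xi_2$ equals $\pm Q\varphi_2$ (or the analogous expression). Completing $Q$ to an orthonormal basis $S_0=Q,S_1,\dots,S_{m_2}$ of $\mathrm{Span}\{Q_j\}$ gives
\[
\mathcal{D}_2|_x=\mathrm{Span}\{S_a\varphi_2(x)\}_{a\ge1}, \qquad \mathcal{D}_4|_x=\mathrm{Span}\{S_a\varphi_4(x)\}_{a\ge1}.
\]

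\emph{Step 3: the isomorphism.} Since each $S_a$ with $a\ge1$ anticommutes with $Q$, we have $S_a\varphi_4=\pm S_aQ\varphi_2=\mp QS_a\varphi_2$. So the orthogonal map $Q$, which depends smoothly on $x$, carries $\mathcal{D}_2|_x$ onto $\mathcal{D}_4|_x$ and gives a global bundle isomorphism. Note that the local choice of the $S_a$ is irrelevant, since only $Q$ is used. Alternatively, one can simply apply Corollary \ref{D_{1} and D_{3}} to $M$ regarded as a member of the dual family, where $\mathcal{D}_2,\mathcal{D}_4$ become its $\mathcal{D}'_1,\mathcal{D}'_3$.

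The main obstacle is Step 1 together with the index bookkeeping. One must check that the congruence really identifies the whole isoparametric family, so that $M$ itself is an OT--FKM hypersurface for the dual system with $M_-$ as its ``$+$'' focal submanifold. One must also confirm that the principal distributions are relabelled as $\mathcal{D}'_1=\mathcal{D}_2$ or $\mathcal{D}_4$ (and correspondingly $\mathcal{D}'_3$), with $\mathcal{D}'_1$ and $\mathcal{D}'_3$ having multiplicity $m_2$. This is cleanest by reversing the normal $\xi\mapsto-\xi$, which sends $\theta\mapsto\pi/4-\theta$ and permutes the principal curvatures cyclically.
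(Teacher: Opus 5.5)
Your proposal is correct and follows essentially the paper's route: regard $M$ as a level set of the dual OT--FKM family so that $\mathcal{D}_2,\mathcal{D}_4$ become $\mathcal{D}'_3,\mathcal{D}'_1$, then apply Corollary~\ref{D_{1} and D_{3}} with the operator $Q$. The paper does this concretely, with no congruence map $\Phi$: it splits a Clifford system $\{P_0,\dots,P_8\}$ on $\mathbb{R}^{16}$ (resp.\ $\{P_0,\dots,P_4\}$ on $\mathbb{R}^8$), so that $f'=-f$ and $M$ is literally a level set of $f'$. One small correction: reversing the normal gives $\lambda_k=-\lambda'_{5-k}$, which reverses the ordering of the principal curvatures rather than shifting it cyclically. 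This does not affect your conclusion, because $m_1\neq m_2$ in all listed cases and so $\{\mathcal{D}_2,\mathcal{D}_4\}=\{\mathcal{D}'_1,\mathcal{D}'_3\}$ is forced by multiplicities.
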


\begin{proof}
	We first address the cases $(m_{1},m_{2}) \in \{(1,6), (2,5), (3,4)\}$, which are all hypersurfaces in $S^{15}$. There exists a symmetric Clifford system $\{P_{0},\dots,P_{8}\}$ on $\mathbb{R}^{16}$ satisfying $\sum_{i=0}^{8}\langle P_{i}x, x \rangle^{2} = |x|^4$ (cf. \cite{F-K-M}). We define two isoparametric functions on $S^{15}$:
	\begin{align*}
		f &= 1 - 2\sum_{i=0}^{m}\langle P_{i}x, x \rangle^{2},\\
		f' &= 1 - 2\sum_{i=m+1}^{8}\langle P_{i}x, x \rangle^{2} = -f.
	\end{align*}
	For an isoparametric hypersurface $M = f^{-1}(\cos4\theta)$, there corresponds a hypersurface $M' = (f')^{-1}(\cos4(\frac{\pi}{4}-\theta))$ such that $M$ and $M'$ coincide as sets. The principal curvatures are related by
	$$
	\lambda_{k} = \cot\left(\theta+\frac{k-1}{4}\pi\right) = -\cot\left(\frac{\pi}{4}-\theta+\frac{4-k}{4}\pi\right) = -\lambda^{'}_{5-k},
	$$
	Here, the minus sign arises from the opposite orientation of the normal vectors. Thus, the principal distributions of $M$ and $M'$ correspond according to $\mathcal{D}_{2} = \mathcal{D}_{3}^{'}$ and $\mathcal{D}_{4} = \mathcal{D}_{1}^{'}$. By an argument analogous to the proof of Corollary \ref{D_{1} and D_{3}}, the isomorphism between $\mathcal{D}_{2}$ and $\mathcal{D}_{4}$ is induced by the operator
	$$
	Q = P' = \frac{1}{\cos2\theta}\sum_{i=m+1}^{8}\langle P_{i}x, x \rangle P_{i}.
	$$
	For the remaining case $(m_{1},m_{2})=(1,2)$, the proof follows identically from the existence of a symmetric Clifford system $\{P_{0},\dots,P_{4}\}$ on $\mathbb{R}^{8}$.

\end{proof}

\subsection{Isomorphisms between $\mathcal{D}_{1}\oplus\mathcal{D}_{2}$ and $\mathcal{D}_{3}\oplus\mathcal{D}_{4}$}

Although the existence of an isomorphism between $\mathcal{D}_{2}$ and $\mathcal{D}_{4}$ in the general case remains an open problem, we establish a partial result in this subsection: the vector bundles $\mathcal{D}_{1}\oplus\mathcal{D}_{2}$ and $\mathcal{D}_{3}\oplus\mathcal{D}_{4}$ are globally isomorphic when $m$ is odd.

Recall Lemma \ref{four principal distributions} and the characterization of the fibers provided in \cite[p. 488]{F-K-M}. The fiber $\mathcal{D}_{2}|_{x}$ can be expressed as:
\begin{align*}
	\mathcal{D}_{2}|_{x} &= \left\{ v\in N_{\varphi_{2}(x)}M_{-} \mid v \perp \xi_{2}(x) \right\} \\
	&= \left\{ v\in E_{+}(P) \mid v \perp \frac{I+P}{\sqrt{2}}\Sigma_{P}\varphi_{1}(x) \quad \text{and} \quad v \perp \frac{I+P}{\sqrt{2}}\varphi_{1}(x) \right\},
\end{align*}
where $\Sigma_{P} = \{ Q \in \Sigma \mid \langle Q, P \rangle = 0 \}$ is the great hypersphere in the Clifford sphere $\Sigma$ orthogonal to $P$, and $\Sigma_{P}\varphi_{1}(x)=\{ Q\varphi_1(x) \mid Q \in \Sigma_P \}$ spans the distribution $\mathcal{D}_{1}$.

Consequently, the $\pm 1$ eigenspaces of $P$ admit the following orthogonal decompositions:
\begin{align*}
	E_{+}(P) &= \frac{I+P}{\sqrt2}\mathrm{Span}\{\varphi_1\} \oplus \frac{I+P}{\sqrt2}\mathcal{D}_{1} \oplus \mathcal{D}_{2},\\[4pt]
	E_{-}(P) &= \frac{I-P}{\sqrt2}\mathrm{Span}\{\varphi_1\} \oplus \frac{I-P}{\sqrt2}\mathcal{D}_{1} \oplus \mathcal{D}_{4}.
\end{align*}


\MainThm*

\begin{proof}
	By Corollary \ref{D_{1} and D_{3}}, we have $\mathcal{D}_{1}\cong\mathcal{D}_{3}$. Thus, it suffices to construct an isomorphism between $\mathcal{D}_{1}\oplus \mathcal{D}_{2}$ and $\mathcal{D}_{1}\oplus \mathcal{D}_{4}$.
	
	Let $x$ be a point in $M$. We choose a local section $R_1$ as in \eqref{R_{0}=P} such that $R_1 \in \Sigma_P$ (i.e., $R_1$ anti-commutes with $P=R_0$). The operator $R_1$ maps the eigenspace $E_+(P)$ isometrically onto $E_-(P)$. We define the maps $\rho_{\pm}$ as the specific linear isometries identifying the geometric distributions with the algebraic eigenspaces:
	$$
	\rho_{\pm}:= \left( \frac{I\pm P}{\sqrt2} \right) \oplus \left( \frac{I\pm P}{\sqrt2} \right) \oplus \operatorname{Id}_{\mathcal{D}_{2}\, (\text{or }\mathcal{D}_{4})}.
	$$
	Consider the following commutative diagram:
	\begin{equation}\label{diagram}
		\begin{tikzcd}[row sep=large, column sep=huge]
			{\mathrm{Span}\{\varphi_1\}\oplus\mathcal{D}_1\oplus\mathcal{D}_2} && {E_+(P)} \\
			{\mathrm{Span}\{\varphi_1\}\oplus\mathcal{D}_1\oplus\mathcal{D}_4} && {E_-(P)} 
			\arrow["{\rho_+}", from=1-1, to=1-3]
			\arrow["\sigma"', dashed, from=1-1, to=2-1]
			\arrow["{R_1}", from=1-3, to=2-3]
			\arrow["{\rho_-}"', from=2-1, to=2-3]
		\end{tikzcd}
	\end{equation}
	The composition $\sigma = \rho_-^{-1} \circ R_1 \circ \rho_+$ induces a linear isometry between the vector bundles. A direct calculation reveals the action of $\sigma$ on the first two summands:
	$$
	\sigma(\varphi_{1}) = R_{1}\varphi_{1} \quad \text{and} \quad \sigma(R_{1}\varphi_{1}) = \varphi_{1}.
	$$
	To isolate the isomorphism on the desired distributions, we decompose $\mathcal{D}_1$. Let $\mathcal{D}_{1}^{0}$ be the orthogonal complement of $\mathrm{Span}\{ R_{1}\varphi_{1}\}$ in $\mathcal{D}_{1}$. The restriction of $\sigma$ yields an isomorphism:
	\begin{equation}\label{sigma}
		\sigma|_{\mathcal{D}_{1}^{0}\oplus \mathcal{D}_{2}}: \mathcal{D}_{1}^{0}\oplus \mathcal{D}_{2} \longrightarrow \mathcal{D}_{1}^{0}\oplus \mathcal{D}_{4}.
	\end{equation}
	We now define a modified map $\tilde{\sigma}: \mathcal{D}_{1}\oplus \mathcal{D}_{2} \to \mathcal{D}_{1}\oplus \mathcal{D}_{4}$ by setting:
	$$
	\tilde{\sigma}(v) = 
	\begin{cases}
		\sigma(v) & \text{if } v \in \mathcal{D}_{1}^{0} \oplus \mathcal{D}_{2}, \\
		R_{1}\varphi_{1} & \text{if } v = R_{1}\varphi_{1}.
	\end{cases}
	$$
	This $\tilde{\sigma}$ provides a local isomorphism between $\mathcal{D}_{1}\oplus \mathcal{D}_{2}$ and $\mathcal{D}_{1}\oplus \mathcal{D}_{4}$.

	To extend this construction $\tilde{\sigma}$ globally, we assume $m$ is odd, in which case the Clifford sphere $\Sigma(P_{0}, \dots, P_{m})\cong S^{m}$ admits a nowhere-vanishing tangent vector field $V$. 
	
	For any $x \in M$, let $P(x)$ be the corresponding point on the Clifford sphere $\Sigma$. We define a global operator field $R_{1}(x) := V(P(x))$. Since $V$ is tangent to the sphere, $\langle R_1(x), P(x) \rangle = 0$, ensuring that $R_1(x) \in \Sigma_{P(x)}$ for all $x$. Using this global section $R_1$, the previous construction yields a globally defined isomorphism $\widetilde{\sigma}: \mathcal{D}_{1}\oplus \mathcal{D}_{2} \xrightarrow{\cong} \mathcal{D}_{1}\oplus \mathcal{D}_{4}$.
	
\end{proof}

\begin{rem}
	In the case $m=1$, the subspace $\mathcal{D}_{1}^{0}$ vanishes. Consequently, the map $\sigma$ constructed in \eqref{sigma} restricts directly to an isomorphism $\mathcal{D}_{2} \xrightarrow{\cong} \mathcal{D}_{4}$. Explicitly, if we parametrize the Clifford circle as $P = (\cos \alpha) P_0 + (\sin \alpha) P_1$, we may choose the tangent vector field $R_{1} = (\sin \alpha)P_{0} - (\cos \alpha)P_{1}$. Since $\mathcal{D}_{2} \subset E_{+}(P)$, the restriction is given by $\sigma|_{\mathcal{D}_{2}} = R_{1} |_{\mathcal{D}_{2}} = P_{0}P_{1}$, where we utilized the anti-commutativity of the Clifford matrices.
\end{rem}

\section{Applications to Almost Complex Geometry}

Let $M^{2n}$ be an even-dimensional smooth manifold. An endomorphism $J: TM\to TM$ satisfying $J^{2}=-\operatorname{Id}$ is termed an \textit{almost complex structure}. Such a structure is said to be \textit{integrable} if $M$ admits a complex atlas compatible with $J$. The celebrated Newlander--Nirenberg theorem states that an almost complex structure $J$ is integrable if and only if its Nijenhuis tensor $N$ vanishes identically, where $N$ is defined for smooth vector fields $X, Y \in \Gamma(TM)$ by
\begin{equation*}
	N(X,Y)=[JX,JY]-J[JX,Y]-J[X,JY]-[X,Y].
\end{equation*}

Tang and Yan \cite{T-Y} initiated the investigation of such structures on isoparametric hypersurfaces of OT--FKM type, providing explicit constructions of integrable almost complex structures in specific cases. Subsequently, Qian, Tang, and Yan \cite{Q-T-Y 2} established that every isoparametric hypersurface with $g=4$ admits an almost complex structure. A natural approach to constructing such structures is via bundle isomorphisms between pairs of principal distributions. For the case $m=1$, they demonstrated that a specific isomorphism mapping $\mathcal{D}_1$ to $\mathcal{D}_3$ and $\mathcal{D}_2$ to $\mathcal{D}_4$ yields an integrable structure after a suitable modification with constant coefficients. A compelling open question is whether this construction can be generalized to cases where $m>1$.

We retain the notation from Section 2. Let $\{e_{1},\dots,e_{2(m_{1}+m_{2})}\}$ be a local principal frame adapted to the distributions. We adopt the index convention $e_{\bar{i}} := e_{(m_{1}+m_{2})+i}$ and assume the frame is ordered such that:
$$
\operatorname{Span}\{e_{a}\}_{a=1}^{m_1} = \mathcal{D}_{1}, \ \operatorname{Span}\{e_{\alpha}\}_{\alpha=m_1+1}^{m_1+m_2} = \mathcal{D}_{2}, \ \operatorname{Span}\{e_{\bar{a}}\} _{a=1}^{m_1} = \mathcal{D}_{3}, \ \operatorname{Span}\{e_{\bar{\alpha}}\}_{\alpha=m_1+1}^{m_1+m_2} = \mathcal{D}_{4}.
$$
In light of Corollary \ref{D_{1} and D_{3}}, we further impose the relation $e_{\bar{a}}=-R_{0}e_{a}$ for $1\leqslant a\leqslant m_{1}$.

The integrable structure constructed in \cite{Q-T-Y 2} for $m=1$ relies on the mapping $e_{1} \mapsto \mu e_{\bar{1}}$ for an arbitrary nonzero constant $\mu$. The following proposition provides an evidence that a direct generalization of this mapping for $m=3$ fails to be integrable.

\begin{prop}
	Let $m=3$. Consider an almost complex structure $J$ that maps $\mathcal{D}_1$ to $\mathcal{D}_3$ and $\mathcal{D}_2$ to $\mathcal{D}_4$. If $J$ satisfies $Je_{a}=\mu e_{\bar{a}}$ for $a=1,2,3$ with a constant $\mu\neq0$, then $J$ is not integrable.
\end{prop}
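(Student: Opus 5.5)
Since $J$ maps $\mathcal{D}_1$ to $\mathcal{D}_3$ and satisfies $J^2=-\operatorname{Id}$, the condition $Je_a=\mu e_{\bar a}$ forces $Je_{\bar a}=-\mu^{-1}e_a$. We argue by contradiction and find a pair of vector fields with $N(X,Y)\neq0$. The natural test pair is $X=e_a$, $Y=e_b$ with $a\neq b$ in $\{1,2,3\}$. Then
\[
N(e_a,e_b)=\mu^2[e_{\bar a},e_{\bar b}]-\mu J[e_{\bar a},e_b]-\mu J[e_a,e_{\bar b}]-[e_a,e_b].
\]
Only the $\mathcal{D}_1$, $\mathcal{D}_3$ components of the brackets enter in a way where $J$ is explicitly known, so we project $N(e_a,e_b)$ onto $\mathcal{D}_1\oplus\mathcal{D}_3$. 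Brackets are computed from the Levi-Civita connection forms of the induced metric, $[X,Y]=\nabla_XY-\nabla_YX$. These are expressed in the principal frame by differentiating the explicit description $e_a=R_a\varphi_1$, $e_{\bar a}=-R_0e_a=R_a\varphi_3$ from \eqref{D_1}, \eqref{D_3}. The $R_a$ are a local orthonormal frame of $\Sigma_P$ with $(R_0,\dots,R_m)=(P_0,\dots,P_m)A(x)$.

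The key computational step is the variation of the Clifford frame. We write $\diff R_i=\sum_j \theta_{ij}R_j$ with $\theta\in\mathfrak{so}(m+1)$. Here $\theta_{0a}$ is determined by $\diff P$, which is computed from \eqref{def of P}. A short calculation should give $\theta_{0a}(e_b)=0$ and $\theta_{0a}(e_{\bar b})$ proportional to $\delta_{ab}$; this reflects the fact that the leaves of $\mathcal{D}_1$ are fibres of $M\to M_+$, where $P$ is constant. Using $\diff\varphi_1$ and $\diff\varphi_3$ from the proof of Lemma \ref{four principal distributions}, we then obtain $\langle\nabla_{e_a}e_b,e_c\rangle$, $\langle\nabla_{e_{\bar a}}e_{\bar b},e_{\bar c}\rangle$ and the mixed terms. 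The decisive quantities are the $\mathcal{D}_3$ component of $[e_a,e_b]$ and the $\mathcal{D}_1$ component of $[e_{\bar a},e_{\bar b}]$. For $m=3$ these involve $\langle R_aR_b\varphi_1,\cdot\rangle$, where $R_aR_b$ is proportional to $R_0R_c$ (with $\{a,b,c\}=\{1,2,3\}$) up to the sign $\pm R_0R_1R_2R_3$. This sign is where $m=3$ genuinely differs from $m=1$, where no such pair exists.

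Next we take the $e_{\bar c}$ component of $N(e_a,e_b)$ and expand it using the relations above. Since $\mu$ is constant it passes through all brackets, and the component becomes a polynomial in $\mu,\mu^{-1}$ times the structure coefficients. The coefficients come from the constant principal curvatures and $\theta$, together with the term $\langle R_0R_1R_2R_3\varphi_1,\varphi_1\rangle$. Nonvanishing then requires that this quantity, or the leaf-geometry constant $1/\sin(\pi/4)$ arising from the $\mathcal{D}_1$–$\mathcal{D}_3$ angle difference, cannot be cancelled for any real $\mu\neq0$. For example, the component may reduce to $c(\mu^2+1)$ with $c\neq0$, or to a similar expression that has no real root.

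The main obstacle is this bookkeeping of the brackets $[e_a,e_{\bar b}]$ projected onto $\mathcal{D}_1\oplus\mathcal{D}_3$. These depend on the unknown gauge $\theta_{ab}$. However, the gauge terms should cancel in $N$ because $N$ is tensorial and the frames $e_a$, $e_{\bar a}$ are rotated simultaneously. We will first verify this cancellation. If the $\mathcal{D}_1\oplus\mathcal{D}_3$ component vanishes, we will instead test the $\mathcal{D}_2\oplus\mathcal{D}_4$ component of $N(e_a,e_b)$, using $J(\mathcal{D}_2)=\mathcal{D}_4$ only to show that the $\mathcal{D}_4$ part of $[e_{\bar a},e_{\bar b}]$ is nonzero while the remaining terms cannot compensate.
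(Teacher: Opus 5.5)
There is a genuine gap. Your choice of test pair $(e_a,e_b)$ and of the Clifford frame is the right one; it is what the paper uses. But the step you call key is false, and the terms you single out are the wrong ones.

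\textbf{The derivative of $P$.} $P$ is \emph{not} constant on the leaves of $\mathcal{D}_1$; what is constant there is $\varphi_1$. The leaf through $x$ is $\{\varphi_1\cos\theta+Q\varphi_1\sin\theta : Q\in\Sigma\}$, and $P$ equals $Q$ at the corresponding point. So $P$ maps each $\mathcal{D}_1$-leaf diffeomorphically onto $\Sigma$. Concretely, substituting $x=\varphi_1\cos\theta+R_0\varphi_1\sin\theta$ gives \eqref{tau}, i.e.\ $\tau_{a0}=\frac{1}{\sin\theta}\omega^a+\frac{1}{\cos\theta}\omega^{\bar a}$. Hence $\diff P(e_b)=\frac{1}{\sin\theta}R_b\neq 0$; $\diff P$ vanishes only on $\mathcal{D}_2\oplus\mathcal{D}_4$.

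With your value, the $\mathcal{D}_1$-component of $N(e_a,e_b)$ would come out zero. Yet this is exactly the nonzero quantity the paper evaluates. Using $\omega_{\bar a\bar b}-\omega_{ab}=\sum_c\tau_{c0}\langle R_cR_a\varphi_1,R_0R_b\varphi_1\rangle$ and Codazzi, one gets $\langle N(e_a,e_b),e_c\rangle=\frac{2}{\sin\theta}\langle R_aR_b\varphi_1,R_0R_c\varphi_1\rangle$. This comes entirely from $\tau_{a0}(e_a)=\frac{1}{\sin\theta}$.

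\textbf{The decisive terms.} The two quantities you call decisive, the $\mathcal{D}_3$-component of $[e_a,e_b]$ and the $\mathcal{D}_1$-component of $[e_{\bar a},e_{\bar b}]$, vanish identically. This is because $\mathcal{D}_1$ and $\mathcal{D}_3$ are integrable ($\omega_{ijk}=0$ whenever $\lambda_i=\lambda_k\neq\lambda_j$). The information sits in the mixed brackets $[e_{\bar a},e_b]$ and $[e_a,e_{\bar b}]$, which combine with the unmixed ones into differences $(\omega_{\bar a\bar b}-\omega_{ab})(\cdot)$. Your fallback has the same defect: $[e_{\bar a},e_{\bar b}]\in\mathcal{D}_3$ has no $\mathcal{D}_4$-part.

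\textbf{The concluding step is missing, and its anticipated form is wrong.} $J$ swaps $\mathcal{D}_1,\mathcal{D}_3$ and $\mathcal{D}_2,\mathcal{D}_4$, with $Je_{\bar c}=-\mu^{-1}e_c$. Hence $\langle JZ,e_c\rangle=-\mu^{-1}\langle Z,e_{\bar c}\rangle$ and $\langle JZ,e_{\bar c}\rangle=\mu\langle Z,e_c\rangle$. It follows that the $e_c$-component of $N(e_a,e_b)$ is independent of $\mu$, and the $e_{\bar c}$-component is $\mu^2$ times a function. Done correctly, the latter needs $\tau_{c0}$ only on $\mathcal{D}_3$, which you had right, and equals $\frac{2\mu^2}{\cos\theta}\langle R_aR_b\varphi_1,R_0R_c\varphi_1\rangle$. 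No expression like $c(\mu^2+1)$ occurs, and no constant $1/\sin(\pi/4)$ appears.

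So the whole content of the proposition is that $\langle R_aR_b\varphi_1,R_0R_c\varphi_1\rangle=\pm\langle R_0R_1R_2R_3\varphi_1,\varphi_1\rangle$ is not identically zero. Note that $R_0R_1R_2R_3$ is not a sign: it is a symmetric involution anticommuting with every $R_i$. You must prove this nonvanishing:
\begin{enumerate}
\item $R_0R_1R_2R_3=P_0P_1P_2P_3$, because $A(x)\in SO(4)$.
\item Since $P_0P_1P_2P_3$ anticommutes with $P$, $\langle P_0P_1P_2P_3\varphi_1,\varphi_1\rangle=\frac{1}{\cos2\theta}\langle P_0P_1P_2P_3x,x\rangle$.
\item This is nonzero at any unit $x$ in the $+1$-eigenspace of $\sin2\theta\,P_0+\cos2\theta\,P_0P_1P_2P_3$. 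Such an $x$ lies on $M$ and has $\langle P_0P_1P_2P_3x,x\rangle=\cos2\theta$.
\end{enumerate}
Without these steps the proposal does not establish non-integrability.
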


\begin{proof}
	Let $\omega_{ij}=\sum_{k}\omega_{ijk}\omega_{k}$ denote the connection 1-forms, and let $h_{i}=\lambda_{i}\omega_{i}$ be the second fundamental forms. The Codazzi equation $\mathrm{d}h_{i}=\sum_{j}\omega_{ij}\wedge h_{j}$ implies the symmetry condition:
	$$
	\sum_{j}(\lambda_{i}-\lambda_{j})\omega_{j}\wedge\omega_{ij}=0, \quad \text{or} \quad (\lambda_{i}-\lambda_{j})\omega_{ijk}=(\lambda_{i}-\lambda_{k})\omega_{ikj}.
	$$
	Consequently, for distinct principal curvatures $\lambda_i \neq \lambda_k$, we have
	\begin{equation}\label{commute index}
		\omega_{ikj}=\frac{\lambda_{i}-\lambda_{j}}{\lambda_{i}-\lambda_{k}}\omega_{ijk}.
	\end{equation}
	In particular, if $\lambda_{i}=\lambda_{k}\neq\lambda_{j}$, it follows that $\omega_{ijk}=0$ (cf. \cite{C-C-J}).

	Recall from \eqref{R_{0}=P} that we may write the frame of the Clifford system as $(R_0, \dots, R_m) = (P_0, \dots, P_m) A(x)$ for some smooth $A: U \to SO(m+1)$ on an open set $U\subseteq M$. The differential of this frame is given by $\diff R_{j} = \sum_{i} R_{i} \tau_{ij}$, where $(\tau_{ij}) = A^{-1}\diff A$ is the Maurer--Cartan form. For any $1\leqslant a\leqslant m$, utilizing the explicit form of $R_0 = P$ defined in \eqref{def of P}, we compute:
	\[\tau_{a0} = \sum_{i} A_{ia}\diff A_{i0} = \frac{1}{\sin2\theta}\sum_{i}A_{ia}\,\diff \innpr{P_{i}x}{x} = \frac{2}{\sin2\theta}\sum_{i}A_{ia}\innpr{P_{i}x}{\mathrm{d}x}= \frac{2}{\sin2\theta}\innpr{R_{a}x}{\mathrm{d}x}.\]
	Expanding $\tau_{a0}$ via the expression $x = \varphi_{1}\cos\theta - \xi_{1}\sin\theta$ (with $\xi_{1}=-R_{0}\varphi_{1}$) yields:	
	\begin{equation}\label{tau}
		\tau_{a0} = \frac{2}{\sin2\theta}\langle R_{a}(\varphi_{1}\cos\theta + R_{0}\varphi_{1}\sin\theta), \mathrm{d}x \rangle = \frac{1}{\sin\theta}\omega^{a} + \frac{1}{\cos\theta}\omega^{\bar{a}}.
	\end{equation}

	We now evaluate the component $\langle N(e_{a},e_{b}),e_{c}\rangle$ of the Nijenhuis tensor for indices $a,b,c \in \{1, \dots, m_1\}$. Since $J e_a = \mu e_{\bar{a}}$ and $e_c \in \mathcal{D}_1$, the definition of $N$ simplifies. A lengthy but straightforward calculation using \eqref{commute index} yields:
	\begin{align*}
		\langle N(e_{a},e_{b}),e_{c}\rangle &= \mu^2 \langle [e_{\bar{a}},e_{\bar{b}}], e_c \rangle  - \mu \langle J[ e_{\bar{a}},e_{b}], e_c \rangle -  \mu\langle J[e_{a},  e_{\bar{b}}], e_c \rangle - \langle [e_{a},e_{b}], e_c \rangle\\
		&= \left( \omega_{b\bar{c}\bar{a}}-\omega_{\bar{a}\bar{c}b}\right) - \left( \omega_{a\bar{c}\bar{b}}-\omega_{\bar{b}\bar{c}a}\right) - \left(\omega_{bca}-\omega_{acb}\right) \\
		&= (\omega_{\bar{b}\bar{c}a}-\omega_{bca}) - (\omega_{\bar{a}\bar{c}b}-\omega_{acb}).
	\end{align*}
	Using the relation $\diff e_{\bar{a}}=-\diff (R_{0}e_{a}) = -\sum_c \tau_{c0}R_{c}e_{a} - R_{0}\,\diff e_{a}$, we express the connection forms with barred indices as:
	\begin{equation}\label{omega abar bbar}
		\omega_{\bar{a}\bar{b}} = \langle \diff e_{\bar{a}}, e_{\bar{b}} \rangle = \tau_{c0}\langle R_{c}R_{a}\varphi_{1}, R_{0}R_{b}\varphi_{1}\rangle + \omega_{ab}.
	\end{equation}
	Specifying the indices to $1, 2, 3$, we find:
	\begin{align*}
		\langle N(e_{1},e_{2}),e_{3}\rangle &= (\omega_{\bar{2}\bar{3}1}-\omega_{231}) - (\omega_{\bar{1}\bar{3}2}-\omega_{132})\\
		&= \tau_{10}(e_{1})\langle R_{1}R_{2}\varphi_1, R_{0}R_{3}\varphi_1\rangle - \tau_{20}(e_{2})\langle R_{2}R_{1}\varphi_1, R_{0}R_{3}\varphi_1\rangle \\
		&= \frac{1}{\sin\theta} \left( \langle R_1 R_2 \varphi_1, R_0 R_3 \varphi_1 \rangle + \langle R_2 R_1 \varphi_1, R_0 R_3 \varphi_1 \rangle \right) \\
		&= -\frac{2}{\sin\theta}\langle R_{0}R_{1}R_{2}R_{3}\varphi_1, \varphi_1\rangle\\
		&= -\frac{2}{\sin\theta\,\cos2\theta}\langle P_{0}P_{1}P_{2}P_{3}x, x\rangle.
	\end{align*}
	The function $x \mapsto \langle P_{0}P_{1}P_{2}P_{3}x, x\rangle$ is not identically zero; its image is $[-\cos2\theta,\cos2\theta]$. Thus, the Nijenhuis tensor does not vanish, and $J$ is not integrable.

\end{proof}

This example illustrates that for $m>1$, a direct isomorphism between the principal distributions does not automatically induce an integrable almost complex structure, highlighting the complexity of the $m>1$ case compared to $m=1$. Nevertheless, such structures possess certain significant geometric properties.

\subsection{Nearly K\"ahler Structures}
\label{subsec--nearly kahler}
For isoparametric hypersurfaces of OT--FKM type with multiplicity pairs $(m_{1},m_{2})=$ $(1,2)$, $(1,6)$, $(2,5)$, or $(3,4)$, the isomorphisms established in Corollaries \ref{D_{1} and D_{3}} and \ref{D_{2}and D_{4}} give rise to natural almost complex structures.

We explicitly illustrate this construction for the case $(m_1, m_2) = (3,4)$. Let the Clifford system be generated by $\{P_0, \dots, P_7\}$. We define two operators $R_0$ and $Q_0$ acting on $TM$ by the following linear combinations:
$$
R_{0} = \frac{1}{\sin 2\theta}\sum_{i=0}^{3}\langle P_{i}x, x \rangle P_{i}, \quad \text{and} \quad Q_{0} = \frac{1}{\cos 2\theta}\sum_{j=4}^{7}\langle P_{j}x, x \rangle P_{j}.
$$
These operators induce an almost complex structure $J$ defined by:
\begin{equation}\label{acs example}
	Jv = \begin{cases}
		-R_{0}v \in \mathcal{D}_{3}, & \text{if } v\in \mathcal{D}_{1}, \\
		-Q_{0}v \in \mathcal{D}_{4}, & \text{if } v\in \mathcal{D}_{2}, \\
		\phantom{-}R_{0}v \in \mathcal{D}_{1}, & \text{if } v\in \mathcal{D}_{3}, \\
		\phantom{-}Q_{0}v \in \mathcal{D}_{2}, & \text{if } v\in \mathcal{D}_{4}.
	\end{cases}
\end{equation}
One verifies directly that $J$ is compatible with the induced metric $g$, rendering $(M, J, g)$ an almost Hermitian manifold. Let $\Phi(X,Y) = g(JX, Y)$ denote the fundamental 2-form (or K\"ahler form). The manifold is termed \textit{almost K\"ahler} if $\diff\Phi = 0$, and \textit{nearly K\"ahler} if the covariant derivative $\nabla \Phi$ is totally skew-symmetric (cf. \cite{G-H}).

We now demonstrate that for certain almost Hermitian isoparametric hypersurfaces, the  K\"ahler form satisfies the nearly K\"ahler condition.

\NearlyKahler*

\begin{proof}
	We present the proof for the case $(m_{1},m_{2})=(3,4)$ with $J$ given by \eqref{acs example}; the arguments for the remaining cases follow analogously. To verify the nearly Kähler condition, we demonstrate the skew-symmetry of $\nabla\Phi$.
	
	Let $\{e_{1},\dots,e_{14}\}$ be a local principal frame adapted to the distributions such that $e_{a} \in \mathcal{D}_{1}$ for $1 \leqslant a \leqslant 3$, and $e_{\alpha} \in \mathcal{D}_{2}$ for $4 \leqslant \alpha \leqslant 7$. We adopt the notation $e_{\bar{k}} := J e_k$ for $1\leqslant k\leqslant 14$. Specifically, $e_{\bar{a}} =Je_a= -R_0 e_a \in \mathcal{D}_3$ and $e_{\bar{\alpha}} = -Q_0 e_\alpha \in \mathcal{D}_4$.
	Expressing the covariant derivative of the fundamental form in terms of connection coefficients $\omega_{ijk}$, we have:
	\begin{align*}
	G_{ijk} &:= (\nabla_{e_{i}}\Phi)(e_{j},e_{k}) \\
	&= -\innpr{J\nabla_{e_{i}}e_{j}}{e_{k}} - \innpr{Je_{j}}{\nabla_{e_{i}}e_{k}} \\
	&= \omega_{j\bar{k}i} - \omega_{k\bar{j}i}.
\end{align*}
The condition that $\nabla \Phi$ is totally skew-symmetric is equivalent to $(\nabla_X \Phi)(X, Y) = 0$ for all $X, Y$. Thus, it suffices to show that $G_{iij} = \omega_{i\bar{j}i} - \omega_{j\bar{i}i} = 0$ for all indices $i, j$.
	We analyze the vanishing of this term by distinguishing the relative positions of indices $i$ and $j$ with respect to the principal distributions. Recall that the Codazzi equation implies $(\lambda_A - \lambda_B)\omega_{ABC} = (\lambda_A - \lambda_C)\omega_{ACB}$ as shown in \eqref{commute index}. In particular, if $\lambda_A = \lambda_B \neq \lambda_C$, then $\omega_{ABC} = 0$.
	
	\noindent\textbf{Case 1: $e_i$ and $e_j$ belong to the same distribution.}
	Here $\lambda_i = \lambda_j$. In this case, $G_{iij}$ vanishes identically by \eqref{commute index}.
	
	
	\noindent\textbf{Case 2: $e_j$ and $e_{\bar{i}}$ belong to the same distribution}.
	We compute
	\begin{align*}
	G_{aa\bar{b}} &= -\omega_{aba} - \omega_{\bar{b}\bar{a}a} = (\omega_{\bar{a}\bar{b}} - \omega_{ab})(e_{a}), \\
	G_{\bar{a}\bar{a}b} &= \omega_{\bar{a}\bar{b}\bar{a}} + \omega_{ba\bar{a}} = (\omega_{\bar{a}\bar{b}} - \omega_{ab})(e_{\bar{a}}), \\
	G_{\alpha\alpha\bar{\beta}} &= -\omega_{\alpha\beta\alpha} - \omega_{\bar{\beta}\bar{\alpha}a} = (\omega_{\bar{\alpha}\bar{\beta}} - \omega_{\alpha\beta})(e_{\alpha}), \\
	G_{\bar{\alpha}\bar{\alpha}\beta} &= \omega_{\bar{\alpha}\bar{\beta}\bar{\alpha}} + \omega_{\beta\alpha\bar{\alpha}} = (\omega_{\bar{\alpha}\bar{\beta}} - \omega_{\alpha\beta})(e_{\bar{\alpha}}).
\end{align*}
Since $e_{\bar{a}}=-R_{0}e_{a}$, we derive from \eqref{omega abar bbar} that,
\[
G_{aa\bar{b}} = -\tau_{c0}(e_{a})\innpr{R_{c}e_{a}}{e_{\bar{b}}} = -\frac{1}{\sin\theta}\innpr{R_{a}R_{a}\varphi_{1}}{e_{\bar{b}}} = 0,
\]
since $R_a^2 = I$ and $\varphi_{1}$ is normal to $M$. Similarly, $G_{\bar{a}\bar{a}b} = 0$. The calculations for the remaining subcases follow analogously.
	
	\noindent\textbf{Case 3: $e_{i}, e_{\bar{i}}, e_{j}, e_{\bar{j}}$ belong to distinct principal distributions.}
We write $x \sim y$ to denote that quantities $x$ and $y$ differ by a non-zero scalar factor. From \eqref{commute index}, we have
$$G_{iij}=-\omega_{j\bar{i}i}\sim \omega_{i\bar{i}j}.$$
More specifically,
\begin{align*}
	&G_{aaj}\sim \omega_{a\bar{a}j}\sim G_{\bar{a}\bar{a}j},\\
	&G_{\alpha\alpha j}\sim\omega_{\alpha\bar{\alpha} j}\sim G_{\bar{\alpha}\bar{\alpha}j}.
\end{align*}
Following from \eqref{D_1} and \eqref{D_3}, we 
assume further
$$
e_{a}=R_{a}\varphi_{1} \quad \text{and} \quad e_{\bar{a}}=-R_{0}R_{a}\varphi_{1} \quad \text{for} \quad 1\leqslant a\leqslant 3.
$$
Recall that $\diff e_{a} = \diff (R_{a}\varphi_{1}) = \sum_c \tau_{ca}e_{c} + R_{a}\diff \varphi_{1}$. It follows that the connection 1-form satisfies
$$
\omega_{a\bar{b}} = \innpr{\diff e_{a}}{e_{\bar{b}}} = \innpr{\diff\varphi_{1}}{R_{a}R_{b}R_{0}\varphi_{1}}.
$$
Consequently, $\omega_{a\bar{a}}$ vanishes because $R_{0}\varphi_{1}$ is a normal vector to $M_{+}$ at $\varphi_{1}$. Similarly, we have $\omega_{\alpha\bar{\alpha}} = 0$. 	

Summarizing the above, we have $G_{iij} = 0$ for all possible choices of $e_{i}$ and $e_{j}$, which verifies the claim for $(m_1, m_2) = (3,4)$. As similar arguments apply to the remaining cases $(m_1, m_2) \in\{ (1,2), (1,6),(2,5)\}$, the proof is now complete.

\end{proof}

\begin{rem}
	Topological constraints prevent these hypersurfaces from being K\"ahler, as the odd Betti numbers of a compact K\"ahler manifold must be even. Moreover, according to the classification by Gray and Hervella \cite{G-H}, the structures constructed here are neither almost K\"ahler nor integrable.
\end{rem}

\subsection{The $*$-Ricci Curvature}

In the context of almost Hermitian geometry, it is natural to investigate curvature invariants that capture the interaction between the Riemannian metric and the almost complex structure. As observed in \cite{d-S}, the $*$\nobreakdash-Ricci curvature defined below is essentially a unique way to incorporate the almost complex structure into the Ricci curvature tensor.

\begin{defn}[see \cite{Y-K}]
	Let $(M^{2n}, J, g)$ be an almost Hermitian manifold. For any $p\in M$ and tangent vectors $X, Y\in T_{p}M$, the $*$-Ricci curvature is defined by
	\begin{equation}
		*\Ric(X, Y) := \operatorname{Tr}\left(Z \mapsto -\frac{1}{2}J \circ R(X, JY)Z\right).
	\end{equation}
\end{defn}

Unlike the standard Ricci curvature, the $*$-Ricci tensor is generally neither symmetric nor skew-symmetric. Let $\{e_1, \ldots, e_{2n}\}$ be an orthonormal basis of $T_pM$ such that $Je_i = e_{i+n}$ and $Je_{i+n} = -e_i$ for $1 \leqslant i \leqslant n$. The components of the $*$-Ricci tensor can be expressed as:
\begin{align*}
	*\Ric(X, Y) &= -\frac{1}{2}\sum_{i=1}^{n}\left(\innpr{JR(X, JY)e_{i}}{e_{i}} + \innpr{JR(X, JY)e_{i+n}}{e_{i+n}}\right)\\
	&= \sum_{i=1}^{n}R(X, JY, e_{i}, e_{i+n}).
\end{align*}

When $M$ is isometrically immersed into a Riemannian manifold $N$, the $*$-Ricci curvature satisfies the following structural equation.

\begin{lem}
	Let $M^{2n}$ be a submanifold of a Riemannian manifold $N^{2n+k}$, and let $\{\xi_1, \ldots, \xi_k\}$ be a local orthonormal frame of the normal bundle $T^\perp M$. Then
	\begin{equation}
		*\Ric(X, Y) = \sum_{i=1}^{n}\overline{R}(X, JY, e_{i}, e_{i+n}) - \sum_{\alpha=1}^{k} \innpr{JA_{\xi_{\alpha}}JA_{\xi_{\alpha}}(X)}{Y},
	\end{equation}
	where $\overline{R}$ denotes the Riemann curvature tensor of the ambient space $N$, and $A_{\xi_{\alpha}}$ is the shape operator associated with $\xi_{\alpha}$. In particular, if $M^{2n}$ is a hypersurface in the standard sphere $S^{2n+1}$ with unit normal $\xi$, then
	\begin{equation}\label{star ricci}
		*\Ric(X, Y) = \innpr{X}{Y} - \innpr{JA_{\xi}JA_{\xi}(X)}{Y}.
	\end{equation}
\end{lem}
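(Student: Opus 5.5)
We prove the lemma directly from the Gauss equation, by substituting it into the frame expression of $*\Ric$ derived just above.

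\textbf{Step 1 (Gauss equation).} For tangent vectors $X,Y,Z,W$ of $M$ we write
\[
R(X,Y,Z,W)=\overline{R}(X,Y,Z,W)+\sum_{\alpha=1}^{k}\Big(\innpr{A_{\xi_\alpha}X}{W}\innpr{A_{\xi_\alpha}Y}{Z}-\innpr{A_{\xi_\alpha}X}{Z}\innpr{A_{\xi_\alpha}Y}{W}\Big).
\]
The sign convention must match the one for which $*\Ric(X,Y)=\sum_i R(X,JY,e_i,e_{i+n})$. This is fixed by requiring that the sectional curvature of the unit sphere be $+1$.

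\textbf{Step 2 (the extrinsic terms).} Take $Y\mapsto JY$, $Z=e_i$, $W=e_{i+n}=Je_i$, and sum over $i$. Using $Je_{i+n}=-e_i$, the sum over $i=1,\dots,n$ of each bilinear expression can be rewritten as half of a sum over the full basis $\{e_1,\dots,e_{2n}\}$. For example,
\[
\sum_{i=1}^n\Big(\innpr{AX}{Je_i}\innpr{AJY}{e_i}-\innpr{AX}{e_i}\innpr{AJY}{Je_i}\Big)=\sum_{j=1}^{2n}\innpr{AX}{Je_j}\innpr{AJY}{e_j}=\innpr{AJY}{-JAX}.
\]
Here we use $\innpr{Je_{i+n}}{\cdot}=-\innpr{e_i}{\cdot}$ to merge the two halves, and $J^T=-J$ since $J$ is orthogonal with $J^2=-\mathrm{Id}$. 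Since $A$ is symmetric, this equals $-\innpr{JY}{AJAX}$. Then $J$-skewness gives $-\innpr{JY}{AJAX}=\innpr{Y}{JAJAX}$. We will need to track signs carefully through these manipulations to land on exactly $-\innpr{JA_{\xi_\alpha}JA_{\xi_\alpha}X}{Y}$; checking that the two index blocks combine with the right sign is the only delicate point.

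\textbf{Step 3 (the sphere).} For $N=S^{2n+1}$ we have
\[
\overline{R}(X,Y,Z,W)=\innpr{Y}{Z}\innpr{X}{W}-\innpr{X}{Z}\innpr{Y}{W},
\]
in the same convention. Then
\[
\sum_i\overline{R}(X,JY,e_i,Je_i)=\sum_i\Big(\innpr{JY}{e_i}\innpr{X}{Je_i}-\innpr{X}{e_i}\innpr{JY}{Je_i}\Big).
\]
By the same full-basis trick this reduces to $\innpr{X}{Y}$. Combining this with Step 2 for $k=1$ gives \eqref{star ricci}.
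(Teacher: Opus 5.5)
Your strategy matches the paper's: the Gauss equation, merging the blocks $i$ and $i+n$ into a sum over the full basis, then symmetry of $A_{\xi_\alpha}$ and skewness of $J$. The gap is the curvature sign convention you fix in Steps 1 and 3. It is the opposite of the one in which the lemma holds, and your own algebra shows this. In Step 2 your chain of (correct) equalities ends at $\innpr{Y}{JA_{\xi_\alpha}JA_{\xi_\alpha}X}$. That is a contribution $+\innpr{JA_{\xi_\alpha}JA_{\xi_\alpha}X}{Y}$, not $-$. In Step 3 the full-basis trick gives
\[
\sum_{j=1}^{2n}\innpr{JY}{e_j}\innpr{X}{Je_j}=\innpr{JY}{-JX}=-\innpr{X}{Y},
\]
not $\innpr{X}{Y}$. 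So with the conventions you wrote down, the argument proves $*\Ric(X,Y)=-\innpr{X}{Y}+\innpr{JA_{\xi}JA_{\xi}X}{Y}$ for hypersurfaces of $S^{2n+1}$, which is the negative of the claim. Your remark that signs must be tracked ``to land on exactly $-\innpr{JA_{\xi_\alpha}JA_{\xi_\alpha}X}{Y}$'' defers precisely the step that fails.

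Neither requirement you invoke fixes the sign. The identity $*\Ric(X,Y)=\sum_i R(X,JY,e_i,e_{i+n})$ holds in every convention with $R(X,Y,Z,W)=\innpr{R(X,Y)Z}{W}$. The unit sphere has sectional curvature $+1$ in both conventions. What matters is whether $K(X,Y)=R(X,Y,X,Y)$ or $K(X,Y)=R(X,Y,Y,X)$. Since $*\Ric$ is defined as a trace of $J\circ R(X,JY)$, its sign flips with that choice.

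The repair is to use the convention implicit in the paper, which is forced by its identity $\sum_i\overline R(X,JY,e_i,e_{i+n})=\innpr{X}{Y}$. Take $R(X,Y)=\nabla_Y\nabla_X-\nabla_X\nabla_Y+\nabla_{[X,Y]}$. Then $\overline R(X,Y,Z,W)=\innpr{X}{Z}\innpr{Y}{W}-\innpr{Y}{Z}\innpr{X}{W}$ on the unit sphere, and
\[
R(X,Y,Z,W)=\overline R(X,Y,Z,W)+\sum_{\alpha=1}^{k}\bigl(\innpr{A_{\xi_\alpha}X}{Z}\innpr{A_{\xi_\alpha}Y}{W}-\innpr{A_{\xi_\alpha}Y}{Z}\innpr{A_{\xi_\alpha}X}{W}\bigr).
\]
Both of your bilinear sums then change sign. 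The same manipulations give $-\sum_\alpha\innpr{A_{\xi_\alpha}X}{JA_{\xi_\alpha}JY}=-\sum_\alpha\innpr{JA_{\xi_\alpha}JA_{\xi_\alpha}X}{Y}$ for the extrinsic part and $+\innpr{X}{Y}$ for the ambient part. This is exactly the paper's computation.
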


\begin{proof}
	Let $B$ denote the second fundamental form of $M$. We evaluate the contribution of the extrinsic geometry using the Gauss equation:
	\begin{align*}
		\innpr{B(X, e_{i})}{B(JY,e_{i+n})} &= \sum_{\alpha}\innpr{B(X, e_{i})}{\xi_{\alpha}}\innpr{B(JY, e_{i+n})}{\xi_{\alpha}}\\
		&= \sum_{\alpha}\innpr{A_{\xi_{\alpha}}(X)}{e_{i}}\innpr{A_{\xi_{\alpha}}(JY)}{e_{i+n}}\\
		&= -\sum_{\alpha}\innpr{A_{\xi_{\alpha}}(X)}{e_{i}}\innpr{JA_{\xi_{\alpha}}(JY)}{e_{i}}.
	\end{align*}
	Similarly,
	$$
	\innpr{B(X, e_{i+n})}{B(JY,e_{i})} = \sum_{\alpha}\innpr{A_{\xi_{\alpha}}(X)}{e_{i+n}}\innpr{JA_{\xi_{\alpha}}(JY)}{e_{i+n}}.
	$$
	Substituting these expressions into the Gauss equation yields:
	\begin{align*}
		*\Ric(X, Y) &= \sum_{i=1}^{n} R(X, JY, e_{i}, e_{i+n})\\
		&= \sum_{i=1}^{n} \overline{R}(X, JY, e_{i}, e_{i+n}) + \sum_{i=1}^{n} \left( \innpr{B(X,e_{i})}{B(JY,e_{i+n})} - \innpr{B(X, e_{i+n})}{B(JY, e_{i})} \right)\\
		&= \sum_{i=1}^{n} \overline{R}(X, JY, e_{i}, e_{i+n}) - \sum_{\alpha=1}^{k} \sum_{l=1}^{2n} \innpr{A_{\xi_{\alpha}}(X)}{e_{l}}\innpr{JA_{\xi_{\alpha}}(JY)}{e_{l}}\\
		&= \sum_{i=1}^{n} \overline{R}(X, JY, e_{i}, e_{i+n}) - \sum_{\alpha=1}^{k} \innpr{A_{\xi_{\alpha}}(X)}{JA_{\xi_{\alpha}}(JY)}.
	\end{align*}
	Finally, utilizing the skew-symmetry of $J$ and the symmetry of $A_{\xi_\alpha}$, we observe that
	$$
	\innpr{A_{\xi_{\alpha}}(X)}{JA_{\xi_{\alpha}}(JY)} = -\innpr{JA_{\xi_{\alpha}}(X)}{A_{\xi_{\alpha}}(JY)} =- \innpr{A_{\xi_{\alpha}}JA_{\xi_{\alpha}}(X)}{JY} = \innpr{JA_{\xi_{\alpha}}JA_{\xi_{\alpha}}(X)}{Y}.
	$$
	The specific formula \eqref{star ricci} for $S^{2n+1}$ follows from the fact that $\sum_{i=1}^n \overline{R}(X, JY, e_i, e_{i+n}) = \innpr{X}{Y}$ when $N$ has constant sectional curvature $1$.

\end{proof}

For isoparametric hypersurfaces of OT--FKM type, the distinct principal curvatures satisfy the relations $\lambda_{1}\lambda_{3}=\lambda_{2}\lambda_{4}=-1$. This property leads to a vanishing result for the $*$-Ricci curvature.

\StarRicVanish*

\begin{proof}
	Any tangent vector $v$ admits a decomposition $v=\sum_{k=1}^{4}v_{k}$ with $v_{k}\in\mathcal{D}_{k}$. Consider an almost complex structure $J$ satisfying $J(\mathcal{D}_k)\subseteq\mathcal{D}_{k+2}$ (indices modulo 4). Consequently,
	$$
	JA_{\xi}JA_{\xi}(v) = \sum_{k=1}^{4}JA_{\xi}J(\lambda_{k}v_{k}) = \sum_{k=1}^{4}\lambda_{k+2}\lambda_{k}J^{2}(v_{k}).
	$$
	Since $\lambda_k \lambda_{k+2} = -1$ and $J^2 = -\operatorname{Id}$, we conclude that
	$$
	JA_{\xi}JA_{\xi}(v) = \sum_{k=1}^{4}(-1)(-v_k) = v.
	$$
	Substituting this into \eqref{star ricci}, we obtain $*\Ric(X, Y) = \innpr{X}{Y} - \innpr{X}{Y} = 0$.
\end{proof}

\begin{rem}
	The $*$-scalar curvature is defined as the trace of $*\Ric$. As a generalization of the classical Yamabe problem, the existence of a conformal metric with constant $*$-scalar curvature has been established by del Rio and Simanca \cite{d-S}. Since the scalar curvature and the $*$-scalar curvature are not necessarily simultaneously constant, Ge and Zhou \cite{G-Z} further investigated the difference between these two curvatures and derived a Yamabe-type theorem.  Proposition \ref{*ric vanish} provides explicit examples of hypersurfaces where both curvatures are constant (specifically, the $*$-scalar curvature vanishes) with respect to the induced metric.
\end{rem}

An almost Hermitian manifold $(M, J, g)$ is called \textit{weakly $*$-Einstein} if $*\Ric = \rho g$ for some smooth function $\rho$, and \textit{$*$-Einstein} if $\rho$ is constant. Evidently, the hypersurfaces satisfying Proposition \ref{*ric vanish} are $*$-Einstein, including concrete examples constructed in Subsection \ref{subsec--nearly kahler}.

We conclude by giving necessary conditions for an almost Hermitian hypersurface to be $*$-Einstein. Let $(M^{2n},J)$ be a hypersurface of $S^{2n+1}$ with unit normal vector field $\xi$. At any point $p \in M$, let $\lambda_{1} , \dots , \lambda_{k}$ be the distinct principal curvatures, and let $E_{\lambda_{1}}, \ldots, E_{\lambda_{k}}$ denote the corresponding eigenspaces.

\begin{prop} 
	Let $M^{2n} \subset S^{2n+1}$ be a hypersurface endowed with an  almost Hermitian structure $(J, g)$, where $g$ denotes the induced metric.
	\begin{enumerate}
		\item The tensor $*\Ric$ is symmetric if and only if $JA_{\xi}J(E_{\lambda}) \subseteq E_{\lambda}$ for every principal curvature $\lambda$.
		\item If $M$ is weakly $*$-Einstein, then at each point $p$, either $0$ is a principal curvature, or $J(E_{\lambda_i}) = E_{-c/\lambda_i}$ for some scalar $c \neq 0$ depending only on $p$.
		\item If $M$ is weakly $*$-Einstein, then $M$ is $*$-Einstein if and only if it has constant Gauss--Kronecker curvature. 
	\end{enumerate}
\end{prop}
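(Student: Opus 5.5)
Everything rests on formula \eqref{star ricci}: $*\Ric(X,Y)=\innpr{X}{Y}-\innpr{TX}{Y}$ with $T:=JA_{\xi}JA_{\xi}$. The three claims become statements about the endomorphism $T$. We use that $J$ is $g$-orthogonal and skew, and that $A:=A_\xi$ is symmetric.

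\emph{(1) Symmetry.} $*\Ric$ is symmetric if and only if $T$ is self-adjoint. The adjoint is $T^{*}=A^{*}J^{*}A^{*}J^{*}=AJAJ$. So the condition is $JAJA=AJAJ$, i.e.\ $A$ commutes with the self-adjoint operator $S:=JAJ$, since $S^*=(-J)A(-J)=JAJ$. Hence $T=SA$ and $T^*=AS$, and symmetry is equivalent to $[A,S]=0$. Since $A$ is diagonalizable with eigenspaces $E_\lambda$, the relation $[A,S]=0$ holds exactly when $S$ preserves every $E_\lambda$. That is the stated condition $JAJ(E_\lambda)\subseteq E_\lambda$.

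\emph{(2) Weakly $*$-Einstein.} Suppose $*\Ric=\rho g$. Then $T=(1-\rho)\,\mathrm{Id}$, so $JAJA=(1-\rho)\mathrm{Id}$. If $0$ is not a principal curvature, $A$ is invertible, and we get $JAJ=(1-\rho)A^{-1}$. Set $c:=1-\rho$.
\begin{itemize}
\item If $c=0$, then $JAJ=0$, forcing $A=0$. This contradicts invertibility, so $c\neq0$.
\item Using $J^{-1}=-J$, the relation rewrites as $AJ=cJ^{-1}A^{-1}=-cJA^{-1}$.
\item For $v\in E_{\lambda_i}$ this gives $A(Jv)=-cJ(\lambda_i^{-1}v)=(-c/\lambda_i)\,Jv$, so $J(E_{\lambda_i})\subseteq E_{-c/\lambda_i}$.
\end{itemize}
Equality follows by applying the same relation to $-c/\lambda_i$: it gives $J(E_{-c/\lambda_i})\subseteq E_{\lambda_i}$, and since $J$ is invertible the dimensions match.

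\emph{(3) Constancy.} Take determinants in $JAJA=(1-\rho)\mathrm{Id}$. Since $\det J=1$ and $J^{2}=-\mathrm{Id}$ (so $\det J^2=1$), we obtain
$$(\det A)^2=(1-\rho)^{2n}.$$
Here $\det A=K$ is the Gauss--Kronecker curvature.
\begin{itemize}
\item If $\rho$ is constant, then $K^2$ is constant, so $K$ is constant on each component by continuity.
\item Conversely, suppose $K$ is constant. If $K\neq0$, then $|1-\rho|=|K|^{1/n}$ is constant. On a connected component, $1-\rho$ is continuous and never zero, so it has constant sign and $\rho$ is constant.
\item If $K\equiv0$, then $0$ is a principal curvature everywhere. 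Then $(1-\rho)^{2n}=0$ gives $\rho\equiv1$, again constant.
\end{itemize}

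The main obstacle is bookkeeping in (2)–(3): the case where $0$ is a principal curvature, and the sign ambiguity coming from the even power in (3). Both are handled by the continuity and connectedness argument above.
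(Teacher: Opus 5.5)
Your proposal is correct. Parts (1) and (2) follow the paper essentially step for step. In (1), symmetry of $*\Ric$ reduces to $A_\xi J A_\xi J = J A_\xi J A_\xi$, i.e.\ to $A_\xi$ commuting with $J A_\xi J$. In (2), the weakly $*$-Einstein condition becomes $A_\xi J = -cJA_\xi^{-1}$ (the paper writes $A_\xi J A_\xi = -cJ$), which is then evaluated on eigenvectors.

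Part (3) is where you genuinely differ. The paper uses two facts: the pairing $J\colon E_\lambda\to E_{-c/\lambda}$ from (2), and the observation that $c=0$ forces $0$ to be a principal curvature. From these it gets the pointwise identity $K=(\rho-1)^n$. You instead take determinants in $JA_\xi JA_\xi=(1-\rho)\mathrm{Id}$ and get $K^2=(1-\rho)^{2n}$ directly. This needs nothing from (2). It also avoids the self-paired eigenvalues $\lambda^2=-c$: there $E_\lambda$ is $J$-invariant, and the paper's grouping formula has to be read with some care.

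The cost of your route is the lost sign, which you recover by continuity on connected components. The paper's sharper identity gives the forward direction ($\rho$ constant $\Rightarrow$ $K$ constant) globally, with no connectedness needed; yours gives it only componentwise. For even $n$, however, the paper's converse ``$K$ constant iff $c$ constant'' silently needs the same continuity-and-connectedness argument you supply. So your treatment of that direction is the more complete one. With $M$ connected, as is implicit throughout, both routes prove the statement.
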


\begin{proof}
	(1) Using \eqref{star ricci}, the skew-symmetric part of the $*$-Ricci tensor is
	\begin{align*}
		*\Ric(X, Y) - *\Ric(Y, X) &= \innpr{JA_{\xi}JA_{\xi}(Y)}{X} - \innpr{JA_{\xi}JA_{\xi}(X)}{Y}\\
		&= \innpr{A_{\xi}JA_{\xi}J(X)}{Y} - \innpr{JA_{\xi}JA_{\xi}(X)}{Y}.
	\end{align*}
	Thus, $*\Ric$ is symmetric if and only if $A_{\xi}JA_{\xi}J = JA_{\xi}JA_{\xi}$. Applying this operator to a principal vector $v \in E_{\lambda}$, we require
	$$
	A_{\xi}(JA_{\xi}Jv) = JA_{\xi}J(A_{\xi}v) = \lambda(JA_{\xi}Jv).
	$$
	This holds if and only if $JA_{\xi}Jv$ is an eigenvector of $A_\xi$ with eigenvalue $\lambda$, i.e., $JA_{\xi}J(E_{\lambda}) \subseteq E_{\lambda}$.
	
	(2) If $M$ is weakly $*$-Einstein, there exists a function $\rho$ such that $*\Ric(X, Y) = \rho \innpr{X}{Y}$. By \eqref{star ricci}, this implies
	$$
	\innpr{JA_{\xi}JA_{\xi}(X)}{Y} = (1-\rho)\innpr{X}{Y}.
	$$
	Letting $c = 1-\rho$, we have the operator identity $JA_{\xi}JA_{\xi} = c\operatorname{Id}$, or equivalently $A_{\xi}JA_{\xi} = -cJ$. For any $v \in E_{\lambda}$,
	$$
	-cJv = A_{\xi}JA_{\xi}(v) = \lambda A_{\xi}(Jv).
	$$
	If $0$ is a principal curvature, choosing a non-zero eigenvector $v \in E_0$ yields $c J v = 0$, which forces $c=0$. In contrast, if all principal curvatures are non-zero, it follows that $A_{\xi}(Jv) = -\frac{c}{\lambda} Jv$. This implies $J(E_{\lambda}) \subseteq E_{-c/\lambda}$ where the non-zero principal curvature $-\frac{c}{\lambda}$ ensures $c\neq 0$. Furthermore, since $J(E_{-c/\lambda})\subseteq E_{\lambda}$ holds similarly, we conclude that $J(E_{\lambda}) = E_{-c/\lambda}$. 
	
		(3) From part (2), the behavior of the principal curvatures depends on the function $c = 1 - \rho$. If $c\neq 0$ at a point $p$, the eigenvalues of $A_\xi$ come in pairs $(\lambda, -c/\lambda)$ (possibly with $\lambda^2 = -c$) with equal multiplicities $\dim E_{\lambda} = \dim E_{-c/\lambda}$ due to the isomorphism $J: E_{\lambda} \to E_{-c/\lambda}$. In this case, the Gauss--Kronecker curvature $K = \det(A_\xi)$ is computed by grouping these pairs:
	$$
	K = \prod_{\lambda} \lambda^{\dim E_{\lambda}} = \prod_{\text{pairs } \{\lambda, -c/\lambda\}} \left( \lambda \cdot \left(-\frac{c}{\lambda}\right) \right)^{\dim E_{\lambda}} = \prod (-c)^{\dim E_{\lambda}} = (-c)^{n} = (\rho-1)^n.
	$$
	Conversely, if $c=0$ at $p$, part (2) asserts that $0$ is a principal curvature, which implies $K=0=(-c)^{n}$.
	Therefore, $K$ is constant if and only if $c$ (and hence $\rho$) is constant, which is the definition of being $*$-Einstein.

\end{proof}

\bibliographystyle{amsplain}

\end{document}